\documentclass[a4paper,12 pt]{article}
\usepackage[utf8]{inputenc}
\usepackage{amsmath}
\usepackage{amssymb}
\usepackage{latexsym}
\usepackage{amsthm}
\usepackage{amsfonts}
\usepackage{graphicx}
\usepackage{color}
\usepackage{shuffle}
\usepackage{graphicx}
\usepackage{xcolor}

\newtheorem{theorem}{Theorem}
\newtheorem{lemma}{Lemma}
\newtheorem{proposition}{Proposition}

\newtheorem{corollary}{Corollary}
\newtheorem{remark}{Remark}

\def \a{\alpha}
\def \z{\zeta}
\def\k{\textbf{k}}
\def\p{\textbf{p}}
\def\N{\mathbb{N}}

\def\s{\overset{t}{\ast}}
\def\H{\mathfrak{H}_t^1}

\def\sh{\ast}

\def\sh{\ast}

\def\h{\mathfrak{H}}
\def\hh{\mathfrak{h}}

\usepackage[affil-it]{authblk}
\usepackage[a4paper,top=3cm,bottom=2cm,left=1.5cm,right=1.5cm,marginparwidth=1cm]{geometry}
\title{On a t-stuffle product formula for interpolated multiple zeta values}
\author{$\text{Pitu Sarkar}^a, \text{Nita Tamang}^b$ \\ $\text{Department of Mathematics, University of North Bengal}^{a,b}$\\
	West Bengal, India 734013\\ $\text{pitucob2016@gmail.com}^a$,~$\text{nita\_math@nbu.ac.in}^b$  }

\date{}
\usepackage[a4paper,top=3cm,bottom=2cm,left=1.5cm,right=1.5cm,marginparwidth=1cm]{geometry}

\begin{document}
		\maketitle
		\begin{abstract}
			In this paper, we obtain a restricted decomposition formula for interpolated multiple zeta values using t-stuffle product. We then derive a recursive formula of t-stuffle product, which also provides a route to the same formula. In both cases, combinatorial description of t-stuffle product is our basic tool. We also provide alternative proofs by mathematical induction for some of the results. 
	   \end{abstract}
	\textbf{Keywords:} Multiple zeta values; interpolated multiple zeta values; t-stuffle product.\\ 
	\textbf{Mathematics Subject Classification:}11M32.

	\section{Introduction}

Multiple Zeta Values (MZVs) are a fascinating class of mathematical objects that have garnered significant attention in recent decades. These intricate numbers have far-reaching connections to various areas of mathematics and physics, including number theory, algebraic geometry, and quantum field theory. The study of MZVs has led to numerous breakthroughs, from the discovery of new algebraic relations to the development of novel computational methods. Also, they are related to Riemann surfaces, which provide a geometric framework for understanding complex analysis. Recently, the concept of Interpolated Multiple Zeta Values has emerged, which are a generalization of MZVs, offering a more nuanced understanding of MZVs and enabling researchers to uncover new connections and patterns in mathematics and physics. This generalization has opened up new avenues for exploration, but it also presents challenges in developing effective computational tools.
	
	
	
	\vspace{.2em}
	
	A finite sequence of positive integers is called an index. An index $\k=\left(k_1,\dots,k_n\right)$ is called an admissible index if $k_1 \geq 2.$ Throughout this article, $\k=\left(k_1,\dots,k_n\right)$ denotes an admissible index. The interpolated multiple zeta value (t-MZV)  introduced by  S. Yamamoto \cite{2}, is a polynomial defined by 
	$$\zeta^{t}(\k)= \zeta^{t}\left( k_1,\dots, k_n\right)=\sum_{\substack{\p=\left(k_1\Box k_2\Box \dots \Box k_n\right)\\ \Box=``," \text{or} ``+"}}t^{n-\text{dep}(\p)}\zeta(\p)~(\in{\mathbb{R}[t]}),$$
whose depth is given by $\text{dep}(\k)=n$ and the quantity $\zeta(\p)$ in the sum is called the multiple zeta value, which is defined by the convergent series
	\begin{align}
		\zeta(\k)= \zeta \left( k_1,\dots,k_n\right)=\sum_{m_1>\dots >m_n\geq 1}\dfrac{1}{m_1^{k_1}\dots m_n^{k_n}}.
	\end{align}
For $t=1$, the interpolated multiple zeta value reduces to a variant of multiple zeta value, called the multiple zeta-star value, defined as follows:
\begin{align}
	\zeta^{\star}(\k)= \zeta^{\star}\left( k_1,\dots,k_n \right)=\sum_{m_1\geq\dots\geq m_n\geq 1}\dfrac{1}{m_1^{k_1}\dots m_n^{k_n}}.
\end{align}

	The study of t-MZVs is an active area of research, where one of the fundamental goals is to understand all algebraic relations they satisfy and to explore the implications of these values in various mathematical and physical contexts.
	However, the algebraic structure of these values is still largely unexplored. Here we consider an algebraic structure related to t-MZVs \cite{6}.
	Assume that $A=\{x, y\}$ is a set of two noncommutative letters. Define $A^*$ as the set of all words that $A$ generates, encompassing the empty word $1$. Let $\hh_t =\mathbb{Q}[t]<A>$ be the noncommutative polynomial algebra over $\mathbb{Q}[t]$ generated by $A$. Define two subalgebras of $\hh_t$,~$$\hh_t^1=\mathbb{Q}[t]+\hh_ty,~ \hh_t^0= \mathbb{Q}[t]+x\hh_ty.$$
	Given any $k\in {\N}$, define $z_k=x^{k-1}y$. For $t=0$, $\hh_0, \hh_0^1$, and $\hh_0^0$ are respectively denoted by $ \hh, \hh^1$, and $ \hh^0$.\\
\textbf{t-Stuffle Product :}
			 The t-harmonic shuffle (stuffle) product $\s$ on $\H$ is a $\mathbb{Q}[t]$-bilinear map, which satisfies the rules\\
			(i)~ $1\s w = w\s1=w$,\\
			(ii)~$z_kw_1\s z_lw_2 = z_k(w_1\s z_lw_2) + z_l(z_kw_1\s w_2)+(1-2t)z_{k+l}(w_1\s w_2) 
			+ [1-\delta(w_1)\delta(w_2)](t^2-t)x^{k+l}(w_1\s w_2), $

			where $w, w_1, w_2 \in{A^*}\cap{\H}$ and $k,l \in{\N}$ and the map $\delta :A^* \rightarrow \{0,1\}$ is defined by 
		$$\delta(w) =
	\begin{cases}
	1& \text{if $w=1$},\\
	0& \text{if $w\neq{1}$}
\end{cases}$$ 
For $t=0$, $\overset{0}{\ast}$ is denoted by $\ast$, which is the stuffle product of multiple zeta values (see \cite{4}). 	
Now, define a $\mathbb{Q}[t]$ linear map $S_t:\hh_t \rightarrow \hh_t$, by $S_t(1)=1$ and 
		$$S_t(wa)=\sigma_t(w)a,~\text{(for all } w\in{A^*,\text{for all }a \in{A})},$$
where  $\sigma_t$ is an automorphism of the noncommutative algebra $\hh_t$ such that $$\sigma_t(x) = x,~\sigma_t(y)=tx+y.$$
For a $\mathbb{Q}$-linear map $Z:\hh^0 \rightarrow \mathbb{R}$, define $$Z^t=Z \circ S_t : \hh_t^0 \rightarrow \mathbb{R}[t]$$ which is a $\mathbb{Q}[t]$-linear map. 
 The interpolated multiple zeta value associated with the map $Z^t$ is defined by 
		\begin{align*}
			{\z^t(k_1,\dots,k_n)=Z^t(z_{k_1}\dots z_{k_n})}.
		\end{align*}
   In \cite{11}, L. Euler obtained a decomposition formula which expressed the product $ \z(m)\z(n), m,n \geq 2$, in terms of double zeta values:	
			\begin{align*}
				\z(m)\z(n)&= \sum_{k=1}^{m} \binom{m+n-k-1}{n-1}\z(m+n-k,k) \nonumber\\
				& \ \ \ + \sum_{k=1}^{n} \binom{m+n-k-1}{m-1}\z(m+n-k,k).
			\end{align*}

Since then, different versions of the above formula have been obtained in various scenarios \cite{1, 8, 9, 10}. Our main result is also a kind of extension of the above formula. The main theorems of this paper are stated bellow:
	
	\begin{theorem}
		\label{thm2}
		For integers $m,u \geq 2, p \geq 1 \text{ and } n,v \geq 0$, 
		\begin{align}
			\label{equn11}
			&z_m{z_p}^n \s z_u{z_p}^v \nonumber \\
			&=\sum_{\substack{1 \leq l \leq n;\\0 \leq k \leq min\{v,n-l\};\\ i+j=k; i,j \in \mathbb{Z}_{\geq 0}\\a_1 + \dots +a_{v+n-l-i-k}=(n+v-l)p;\\a_l=rp;r \in \mathbb{N}, |A|=j}} \binom{v+n-l-2k}{v-k}{(t^2-t)}^i{(1-2t)}^j \times \nonumber \\ 
			& \ \ \ \ \bigg[z_m\{{z_p}^lz_u+(1-2t){z_p}^{l-1}z_{u+p}  +(1-\delta_{v,0}\delta_{n,l})(t^2-t){z_p}^{l-1}{x}^{u+p}\}\bigg] z_{a_1} \dots z_{a_{v+n-l-i-k}} \nonumber \\
			& \ \  +\sum_{\substack{1 \leq l \leq v;\\0 \leq k \leq min\{n,v-l\};\\ i+j=k; i,j \in \mathbb{Z}_{\geq 0}\\a_1 + \dots +a_{v+n-l-i-k}=(n+v-l)p;\\a_l=rp;r \in \mathbb{N}, |A|=j}} \binom{v+n-l-2k}{n-k}{(t^2-t)}^i{(1-2t)}^j \times \nonumber \\ 
			& \ \ \ \ \bigg[z_u\{{z_p}^lz_m+(1-2t){z_p}^{l-1}z_{m+p}  +(1-\delta_{n,0}\delta_{v,l})(t^2-t){z_p}^{l-1}{x}^{m+p}\}\bigg] z_{a_1} \dots z_{a_{v+n-l-i-k}} \nonumber \\
			& \ \  + \sum_{\substack{\\0 \leq k \leq min\{n,v\};\\ i+j=k; i,j \in \mathbb{Z}_{\geq 0}\\a_1 + \dots +a_{v+n-i-k}=(n+v)p;\\a_l=rp;r \in \mathbb{N}, |A|=j}} \binom{v+n-2k}{n-k}{(t^2-t)}^i{(1-2t)}^j \times \nonumber \\ 
			& \ \ \ \ \bigg[z_mz_u+z_uz_m+(1-2t)z_{m+u} +(1-\delta_{n,0}\delta_{v,0})(t^2-t){x}^{m+u}\}\bigg] z_{a_1} \dots z_{a_{v+n-i-k}}. 
		\end{align}
	\end{theorem}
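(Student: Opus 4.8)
The plan is to deduce \eqref{equn11} from a \emph{single} application of the defining rule (ii) of $\s$, after first computing in closed form the two ``degenerate'' products in which one or both factors are pure powers $z_p^j$ (these words are inadmissible, so the theorem itself does not cover them). Everything below is a purely algebraic identity in $\H$, so no analytic input on the values $\z^t$ is needed; $\mathbb{Q}[t]$-bilinearity and commutativity of $\s$ (the latter being immediate from the symmetry of rule (ii)) are used freely.

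\textbf{Step 1 (pure powers).} First I would prove a lemma evaluating $z_p^n\s z_p^v$. A term of $z_p^n\s z_p^v$ is produced by a sequence of moves --- take a letter from the first word, take one from the second, merge one from each as $z_{2p}$ (weight $1-2t$), or merge one from each as $x^{2p}$ (weight $t^2-t$) --- with the last kind forbidden as the final move; since every letter equals $z_p$, the resulting word depends only on the sequence of move-types, and a given arrangement of $i$ copies of $x^{2p}$, $j$ copies of $z_{2p}$ and $n+v-2(i+j)$ copies of $z_p$ arises from exactly $\binom{n+v-2(i+j)}{n-(i+j)}$ move sequences (choose which of the copies of $z_p$ are supplied by the first word). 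Summing and using $x^{2p}z_a=z_{2p+a}$ to absorb the $x^{2p}$-blocks into their successors gives
$$z_p^n\s z_p^v=\sum_{\substack{0\leq k\leq\min\{n,v\};\ i+j=k\\ a_1+\cdots+a_{n+v-i-k}=(n+v)p;\\ a_\ell\in p\N,\ |A|=j}}\binom{n+v-2k}{n-k}(t^2-t)^i(1-2t)^j\,z_{a_1}\cdots z_{a_{n+v-i-k}},$$
which is the third sum of \eqref{equn11} with the prefix bracket stripped off; this also pins down the meaning of the conditions ``$a_\ell\in p\N$'' and ``$|A|=j$''.

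\textbf{Step 2 (one pure factor) and the theorem.} Next, by induction on $n+v$, I would prove a lemma for $z_a z_p^n\s z_p^v$ with $a\in\N$ arbitrary. Applying rule (ii) with $w_1=z_p^n$ and $w_2=z_p^{v-1}$ (for $v\geq1$; $v=0$ is trivial), the summand $z_a(z_p^n\s z_p^v)$ is evaluated by Step~1, the summands with prefix $z_{a+p}$ or $x^{a+p}$ reduce to $z_p^n\s z_p^{v-1}$ by Step~1, and $z_p(z_a z_p^n\s z_p^{v-1})$ is covered by the inductive hypothesis; collecting terms according to the number $l$ of leading copies of $z_p$ that precede the letter $z_a$ and to the fate of $z_a$ (it survives, becomes $z_{a+p}$, or becomes $x^{a+p}$) yields the first sum of \eqref{equn11} with the leading $z_m$ removed. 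The theorem then follows from one application of rule (ii) to $z_m z_p^n\s z_u z_p^v$ (with $w_1=z_p^n$, $w_2=z_p^v$): by commutativity $z_p^n\s z_u z_p^v=z_u z_p^v\s z_p^n$, so the first two resulting summands are given by Step~2 and, after left-multiplication by $z_m$ resp.\ $z_u$, produce the first two sums of \eqref{equn11}; the remaining $(1-2t)z_{m+u}$- and $(t^2-t)x^{m+u}$-summands, combined with the Step~1 expansion of $z_p^n\s z_p^v$, produce the third sum, the flag $1-\delta_{n,0}\delta_{v,0}$ matching the factor $1-\delta(w_1)\delta(w_2)$ in rule (ii).

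\textbf{Main obstacle.} The real work is the bookkeeping in Steps~1 and~2: checking that the binomial coefficients come out as $\binom{v+n-l-2k}{v-k}$ in the first sum and $\binom{v+n-l-2k}{n-k}$ in the second (the asymmetry recording which word supplies the $l$ leading copies of $z_p$), that the condition $\sum a_\ell=(n+v-l)p$ together with $a_\ell\in p\N$ and $|A|=j$ faithfully encodes the block structure after the $x$-blocks have been absorbed, and --- most delicately --- that the three flags $\delta_{v,0}\delta_{n,l}$, $\delta_{n,0}\delta_{v,l}$ and $\delta_{n,0}\delta_{v,0}$ are exactly the cases in which the recursion would otherwise place an $x$-block in final position, i.e.\ produce a word outside $\H$. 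I expect the cleanest route is to prove Step~1 by the direct move-counting argument above, keeping its combinatorics transparent, and then to let Steps~2 and~3 each introduce only one additional leading letter, so that at every stage only one fresh three-way case analysis --- survive, $z$-merge, or $x$-merge of that letter --- has to be checked against the claimed formula.
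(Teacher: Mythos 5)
Your proposal is correct and follows essentially the same route as the paper: your Step 1 is the paper's Lemma \ref{lem2} (pure powers $z_p^n \s z_p^v$, with the same binomial-coefficient count), your Step 2 is the paper's Lemma \ref{lem3} (one pure factor, proved by the same induction on the exponent of the pure factor), and your final assembly is exactly the paper's single application of rule (ii) to $z_m z_p^n \s z_u z_p^v$. The only cosmetic difference is that the $l=0$ terms of your Step 2 lemma end up in the third sum of \eqref{equn11} (as $z_m z_u$ and $z_u z_m$) rather than in the first two, which is how the paper organizes it as well.
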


	\begin{theorem}
	\label{thmp3}
	For integers $m, u, p \geq 1 \text{ and } n,v \geq 0$, 
	\begin{align}
		\label{eq6}
		&z_m{z_p}^n \s z_u{z_p}^v \nonumber \\
		&=\sum_{i=1}^{n}z_m\{{z_p}^iz_u+(1-2t){z_p}^{i-1}z_{u+p}+(1-\delta_{v,0}\delta_{n,i})(t^2-t){z_p}^{i-1}x^{u+p}\}({z_p}^{v} \s {z_p}^{n-i}) \nonumber \\
		& \ \ \ + \sum_{i=1}^{v}z_u\{{z_p}^iz_m+(1-2t){z_p}^{i-1}z_{m+p}+(1-\delta_{n,0}\delta_{v,i})(t^2-t){z_p}^{i-1}x^{m+p}\}({z_p}^{n} \s {z_p}^{v-i}) \nonumber \\
		& \ \ \  +\{z_mz_u+z_uz_m+(1-2t)z_{m+u}+(1-\delta_{n,0}\delta_{v,0})(t^2-t)x^{m+u}\}({z_p}^{n} \s {z_p}^{v}).
	\end{align}
\end{theorem}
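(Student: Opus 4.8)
The plan is to reduce the identity \eqref{eq6} to the single auxiliary formula
$$
{z_p}^n \s z_u{z_p}^v=\sum_{i=1}^{n}\{{z_p}^iz_u+(1-2t){z_p}^{i-1}z_{u+p}+(1-\delta_{v,0}\delta_{n,i})(t^2-t){z_p}^{i-1}x^{u+p}\}({z_p}^{v}\s{z_p}^{n-i})+z_u({z_p}^{n}\s{z_p}^{v}),
$$
valid for all $u,p\geq 1$ and $n,v\geq 0$, which I would prove by induction on $n$. For $n=0$ both sides collapse to $z_u{z_p}^v$, by rule (i) and the convention that the empty sum is $0$. For $n\geq 1$ I would write ${z_p}^n=z_p\,{z_p}^{n-1}$ and apply the second defining rule of $\s$ with $k=p$, $w_1={z_p}^{n-1}$, $l=u$, $w_2={z_p}^v$; since $\delta({z_p}^{n-1})=\delta_{n,1}$ and $\delta({z_p}^v)=\delta_{v,0}$, this expresses ${z_p}^n\s z_u{z_p}^v$ as $z_p({z_p}^{n-1}\s z_u{z_p}^v)$ plus $z_u({z_p}^n\s{z_p}^v)$ plus the two contracted terms $(1-2t)z_{u+p}({z_p}^{n-1}\s{z_p}^v)$ and $(1-\delta_{n,1}\delta_{v,0})(t^2-t)x^{u+p}({z_p}^{n-1}\s{z_p}^v)$. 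Substituting the induction hypothesis into $z_p({z_p}^{n-1}\s z_u{z_p}^v)$, left-multiplying the resulting sum by $z_p$, and shifting the summation index $i\mapsto i+1$ (using $\delta_{n-1,i}=\delta_{n,i+1}$) produces the $i=2,\dots,n$ summands of the auxiliary formula together with the stray term $z_pz_u({z_p}^{n-1}\s{z_p}^v)$. By commutativity of $\s$ one has ${z_p}^{n-1}\s{z_p}^v={z_p}^v\s{z_p}^{n-1}$, so this stray term and the two contracted terms above recombine exactly into the missing $i=1$ summand, and adding $z_u({z_p}^n\s{z_p}^v)$ back in closes the induction.

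Granting the auxiliary formula, \eqref{eq6} follows from one more application of the second rule of $\s$, this time with $k=m$, $w_1={z_p}^n$, $l=u$, $w_2={z_p}^v$:
$$
z_m{z_p}^n\s z_u{z_p}^v=z_m({z_p}^n\s z_u{z_p}^v)+z_u(z_m{z_p}^n\s{z_p}^v)+(1-2t)z_{m+u}({z_p}^n\s{z_p}^v)+(1-\delta_{n,0}\delta_{v,0})(t^2-t)x^{m+u}({z_p}^n\s{z_p}^v),
$$
where I used $\delta({z_p}^n)=\delta_{n,0}$ and $\delta({z_p}^v)=\delta_{v,0}$. Expanding $z_m({z_p}^n\s z_u{z_p}^v)$ by the auxiliary formula reproduces the first sum of \eqref{eq6} and leaves the extra term $z_mz_u({z_p}^n\s{z_p}^v)$. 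For the term $z_u(z_m{z_p}^n\s{z_p}^v)$ I would use commutativity to rewrite $z_m{z_p}^n\s{z_p}^v={z_p}^v\s z_m{z_p}^n$ and apply the auxiliary formula with $(n,u,v)$ replaced by $(v,m,n)$; this yields the second sum of \eqref{eq6} and the extra term $z_uz_m({z_p}^v\s{z_p}^n)=z_uz_m({z_p}^n\s{z_p}^v)$. Finally, collecting the four ``head-contracted'' contributions $z_mz_u({z_p}^n\s{z_p}^v)$, $z_uz_m({z_p}^n\s{z_p}^v)$, $(1-2t)z_{m+u}({z_p}^n\s{z_p}^v)$ and $(1-\delta_{n,0}\delta_{v,0})(t^2-t)x^{m+u}({z_p}^n\s{z_p}^v)$ gives exactly the third sum of \eqref{eq6}, completing the proof.

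Beyond the defining rules of $\s$, its $\mathbb{Q}[t]$-bilinearity and its commutativity, the argument needs no new idea, so the main difficulty is purely bookkeeping: one must check that the boundary summand $i=1$ of the auxiliary formula really is what emerges from recombining the stray $z_pz_u(\cdots)$ term with the two contracted terms of rule (ii), and that the Kronecker flags $1-\delta(w_1)\delta(w_2)$ produced by the product rule line up with the flags $1-\delta_{v,0}\delta_{n,i}$, $1-\delta_{n,0}\delta_{v,i}$ and $1-\delta_{n,0}\delta_{v,0}$ in the statement after all the index shifts and the symmetry swap $(m,n)\leftrightarrow(u,v)$. An alternative route is to bypass the auxiliary formula and prove \eqref{eq6} directly by induction on $n+v$ (with $n=0$ and $v=0$ as base cases): one application of the second rule of $\s$, after rewriting ${z_p}^n=z_p\,{z_p}^{n-1}$ and ${z_p}^v=z_p\,{z_p}^{v-1}$ where needed, expresses $z_m{z_p}^n\s z_u{z_p}^v$ in terms of instances of the formula of strictly smaller total degree, after which the three sums are matched term by term; but the two-step route above is shorter and makes the recursive structure transparent.
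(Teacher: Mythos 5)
Your proposal is correct and follows essentially the same route as the paper: the outer decomposition via rule (ii) is the paper's equation \eqref{eq7}, and your auxiliary formula is exactly the paper's \eqref{eq8}--\eqref{eq9} (equivalently its Lemma \ref{lem3}, which the paper also proves by the same induction, shifting the index and recombining the stray head term with the two contracted terms). The only cosmetic difference is that you prove this special case directly rather than extracting it from the general recursive formula of Proposition \ref{thm302} with $j=1$.
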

		
One can observe that both of the above two theorems give the formulas for the same t-stuffle product. Theorem $1$ is proved in Section $2$. In Section $3$, the recursive formula of t-stuffle product is derived first, which is then followed by the proof of Theorem $2$. It is also shown that Theorem $1$ can be obtained from Theorem $2$. Applying the $\mathbb{Q}[t]$-linear map $Z^t$ on both sides of \eqref{equn11}, we obtain the following restricted decomposition formula for interpolated multiple zeta values:

\begin{theorem}
	\label{thmp4}
	Let  $m,u \geq 2, p \geq 1, n,v \geq 0$ are integers. Then we have
	\begin{align}
		\label{equn12}
		&\z^t (m,\underbrace{p, \dots, p}_n) \z^t (u,\underbrace{p, \dots, p}_v)\nonumber \\
		&=\sum_{\substack{1 \leq l \leq n;\\0 \leq k \leq min\{v,n-l\};\\ i+j=k; i,j \in \mathbb{Z}_{\geq 0}\\a_1 + \dots +a_{v+n-l-i-k}=(n+v-l)p;\\a_l=rp;r \in \mathbb{N}, |A|=j}} \binom{v+n-l-2k}{v-k}{(t^2-t)}^i{(1-2t)}^j \times \nonumber \\ 
		& \ \ \ \  \bigg[\z^t(m,{\{p\}}^l,u,a_1,  \dots , a_{v+n-l-i-k})+(1-2t)\z^t(m,\{p\}^{l-1},u+p,a_1,  \dots , a_{v+n-l-i-k})\nonumber \\ 
		& \ \ \ \  +(1-\delta_{v,0}\delta_{n,l})(t^2-t)\z^t(m,\{p\}^{l-1},u+p+a_1, a_2, \dots , a_{v+n-l-i-k})\bigg]  \nonumber \\
		&+\sum_{\substack{1 \leq l \leq v;\\0 \leq k \leq min\{n,v-l\};\\ i+j=k; i,j \in \mathbb{Z}_{\geq 0}\\a_1 + \dots +a_{v+n-l-i-k}=(n+v-l)p;\\a_l=rp;r \in \mathbb{N}, |A|=j}} \binom{v+n-l-2k}{n-k}{(t^2-t)}^i{(1-2t)}^j \times \nonumber \\ 
		& \ \ \ \  \bigg[\z^t(u,\{p\}^l,m,a_1,  \dots , a_{v+n-l-i-k})+(1-2t)\z^t(u,\{p\}^{l-1},m+p,a_1,  \dots , a_{v+n-l-i-k})\nonumber \\ 
		& \ \ \ \  +(1-\delta_{n,0}\delta_{v,l})(t^2-t)\z^t(u,\{p\}^{l-1},m+p+a_1, a_2, \dots , a_{v+n-l-i-k})\bigg] 
		\nonumber \\
		&+ \sum_{\substack{\\0 \leq k \leq min\{n,v\};\\ i+j=k; i,j \in \mathbb{Z}_{\geq 0}\\a_1 + \dots +a_{v+n-i-k}=(n+v)p;\\a_l=rp;r \in \mathbb{N}, |A|=j}} \binom{v+n-2k}{n-k}{(t^2-t)}^i{(1-2t)}^j \times \bigg[\z^t(m,u,a_1, a_2, \dots , a_{v+n-i-k}) \nonumber \\ 
		& \ \ \ \  +\z^t(u,m,a_1, a_2, \dots , a_{v+n-i-k}) +(1-2t)\z^t(m+u,a_1, a_2, \dots , a_{v+n-i-k})  \nonumber \\
		& \ \ \ \  +(1-\delta_{n,0}\delta_{v,0})(t^2-t)\z^t(m+u+a_1, a_2, \dots , a_{v+n-i-k})\bigg]. 
	\end{align}
\end{theorem}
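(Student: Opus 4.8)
The plan is to obtain Theorem~\ref{thmp4} as a direct consequence of Theorem~\ref{thm2} by applying the $\mathbb{Q}[t]$-linear map $Z^t$ to both sides of \eqref{equn11}. The first step is to recall that, by the definition of $\zeta^t$, we have $Z^t(z_{c_1}\dots z_{c_r}) = \zeta^t(c_1,\dots,c_r)$ whenever $z_{c_1}\dots z_{c_r} \in \mathfrak{H}_t^0$, i.e. whenever $c_1 \geq 2$. Since $m,u \geq 2$ by hypothesis, every word appearing on the right-hand side of \eqref{equn11} whose leading letter is $z_m$, $z_u$, $z_{m+u}$, $z_{m+p}$, or $z_{u+p}$ lies in $\mathfrak{H}_t^0$, so $Z^t$ may legitimately be applied term by term; I would note this admissibility check explicitly, as it is the only place the hypotheses $m,u\geq 2$ are used.

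Next I would handle the three groups of terms on the right-hand side of \eqref{equn11} separately, matching them with the three groups in \eqref{equn12}. For the first sum, the bracketed word is
\[
z_m\{{z_p}^lz_u+(1-2t){z_p}^{l-1}z_{u+p}+(1-\delta_{v,0}\delta_{n,l})(t^2-t){z_p}^{l-1}x^{u+p}\}z_{a_1}\dots z_{a_{v+n-l-i-k}}.
\]
Expanding the brace and using $\mathbb{Q}[t]$-linearity of $Z^t$, the first piece $z_m{z_p}^lz_u z_{a_1}\dots z_{a_{v+n-l-i-k}}$ maps to $\zeta^t(m,\{p\}^l,u,a_1,\dots,a_{v+n-l-i-k})$; the second piece maps to $(1-2t)\zeta^t(m,\{p\}^{l-1},u+p,a_1,\dots,a_{v+n-l-i-k})$. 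For the third piece I would use the key observation that $x^{u+p}z_{a_1}\dots = x^{u+p-1}(x^{a_1-1}y)z_{a_2}\dots = z_{u+p+a_1}z_{a_2}\dots$ (recalling $z_k = x^{k-1}y$ and $a_1 = rp \geq p \geq 1$, so the exponent is a genuine positive integer and the concatenation is again a word of the form $z_{c_1}\dots z_{c_r}$), which sends it to $(t^2-t)\zeta^t(m,\{p\}^{l-1},u+p+a_1,a_2,\dots,a_{v+n-l-i-k})$. The same manipulation applied to the second and third sums, with the roles of $(m,n)$ and $(u,v)$ interchanged in the second and with the four-term bracket $z_mz_u+z_uz_m+(1-2t)z_{m+u}+(1-\delta_{n,0}\delta_{v,0})(t^2-t)x^{m+u}$ in the third, produces exactly the remaining groups of \eqref{equn12}. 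The binomial coefficients, the powers $(t^2-t)^i$ and $(1-2t)^j$, and all the summation constraints are carried along unchanged since $Z^t$ is $\mathbb{Q}[t]$-linear and does not interact with the index set.

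Finally I would observe that the left-hand side is immediate: since $z_m{z_p}^n$ and $z_u{z_p}^v$ both lie in $\mathfrak{H}_t^0$ (again using $m,u\geq 2$), and since $Z^t$ is an algebra homomorphism from $(\mathfrak{H}_t^0,\sh)$ to $(\mathbb{R}[t],\cdot)$ — this is the property that makes the t-stuffle product encode the product of t-MZVs — we get $Z^t(z_m{z_p}^n \sh z_u{z_p}^v) = Z^t(z_m{z_p}^n)\,Z^t(z_u{z_p}^v) = \zeta^t(m,\{p\}^n)\,\zeta^t(u,\{p\}^v)$. Combining this with the term-by-term image of the right-hand side yields \eqref{equn12}. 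The only genuinely delicate point is the bookkeeping in the third bracket of each sum, where the letter $x^{u+p}$ (resp.\ $x^{m+p}$, $x^{m+u}$) must be absorbed into the following $z_{a_1}$ to recover a standard admissible-looking index entry $u+p+a_1$; everything else is routine linearity and transcription, so I expect no real obstacle beyond careful index management and confirming the edge cases $n=l$, $v=0$, etc., where the Kronecker-delta factors switch off.
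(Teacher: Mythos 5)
Your proposal is correct and is exactly the route the paper takes: the paper derives Theorem~\ref{thmp4} in one line by applying the $\mathbb{Q}[t]$-linear map $Z^t$ to both sides of \eqref{equn11}, relying on $Z^t$ being an algebra homomorphism for the t-stuffle product and on the identification $x^{u+p}z_{a_1}=z_{u+p+a_1}$. You simply spell out the admissibility check ($m,u\geq 2$) and the absorption of the $x$-powers more explicitly than the paper does.
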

The above formula generalizes the one for multiple zeta values given by K. W. Chen\cite[Theorem 1]{7} and for multiple zeta-star values given by Z. Li and C. Qin\cite[Corollary 1.5]{3}. 	Taking $p=1$ in \eqref{equn12}, we get the following decomposition formula for t-MZVs of height one, that is, for t-MZVs with all the arguments $1$ except the first:
	\begin{corollary}
		Let  $m,u \geq 2, n,v \geq 0$ are integers. Then we have
		\begin{align*}
			&\z^t (m,\underbrace{1, \dots, 1}_n) \z^t (u,\underbrace{1, \dots, 1}_v)\nonumber \\
			&=\sum_{\substack{1 \leq l \leq n;\\0 \leq k \leq min\{v,n-l\};\\ i+j=k; i,j \in \mathbb{Z}_{\geq 0}\\a_1 + \dots +a_{v+n-l-i-k}=(n+v-l);\\a_l \in \mathbb{N}, |A|=j}} \binom{v+n-l-2k}{v-k}{(t^2-t)}^i{(1-2t)}^j \times \nonumber \\ 
			& \ \ \ \  \bigg[\z^t(m,{\{1\}}^l,u,a_1,  \dots , a_{v+n-l-i-k})+(1-2t)\z^t(m,\{1\}^{l-1},u+1,a_1,  \dots , a_{v+n-l-i-k})\nonumber \\
		\end{align*}
	\begin{align*} 
			& \ \ \ \  +(1-\delta_{v,0}\delta_{n,l})(t^2-t)\z^t(m,\{1\}^{l-1},u+1+a_1, a_2, \dots , a_{v+n-l-i-k})\bigg]  \nonumber \\
			&+\sum_{\substack{1 \leq l \leq v;\\0 \leq k \leq min\{n,v-l\};\\ i+j=k; i,j \in \mathbb{Z}_{\geq 0}\\a_1 + \dots +a_{v+n-l-i-k}=(n+v-l);\\a_l \in \mathbb{N}, |A|=j}} \binom{v+n-l-2k}{n-k}{(t^2-t)}^i{(1-2t)}^j \times \nonumber \\ 
			& \ \ \ \  \bigg[\z^t(u,\{1\}^l,m,a_1,  \dots , a_{v+n-l-i-k})+(1-2t)\z^t(u,\{1\}^{l-1},m+1,a_1,  \dots , a_{v+n-l-i-k})\nonumber \\ 
			& \ \ \ \  +(1-\delta_{n,0}\delta_{v,l})(t^2-t)\z^t(u,\{1\}^{l-1},m+1+a_1, a_2, \dots , a_{v+n-l-i-k})\bigg] 
			\nonumber \\
			&+ \sum_{\substack{\\0 \leq k \leq min\{n,v\};\\ i+j=k; i,j \in \mathbb{Z}_{\geq 0}\\a_1 + \dots +a_{v+n-i-k}=(n+v);\\a_l \in \mathbb{N}, |A|=j}} \binom{v+n-2k}{n-k}{(t^2-t)}^i{(1-2t)}^j \times \bigg[\z^t(m,u,a_1, a_2, \dots , a_{v+n-i-k}) \nonumber \\ 
			& \ \ \ \  +\z^t(u,m,a_1, a_2, \dots , a_{v+n-i-k}) +(1-2t)\z^t(m+u,a_1, a_2, \dots , a_{v+n-i-k})  \nonumber \\
			& \ \ \ \  +(1-\delta_{n,0}\delta_{v,0})(t^2-t)\z^t(m+u+a_1, a_2, \dots , a_{v+n-i-k})\bigg]. 
		\end{align*}	
	\end{corollary}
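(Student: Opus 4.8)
The plan is to derive the corollary directly as the specialization $p = 1$ of Theorem \ref{thmp4}. Concretely, in identity \eqref{equn12} I would set $p = 1$ throughout: the repeated string $\{p\}^l$ becomes $\{1\}^l$; the shifted arguments $u+p$, $m+p$, $u+p+a_1$, $m+p+a_1$ become $u+1$, $m+1$, $u+1+a_1$, $m+1+a_1$; and the weight conditions $a_1 + \cdots + a_{v+n-l-i-k} = (n+v-l)p$ and $a_1 + \cdots + a_{v+n-i-k} = (n+v)p$ become the same identities with right-hand sides $n+v-l$ and $n+v$. All of the combinatorial data attached to the three sums --- the ranges $1 \le l \le n$ (resp.\ $1 \le l \le v$), $0 \le k \le \min\{v,n-l\}$ (resp.\ $\min\{n,v-l\}$, $\min\{n,v\}$), the splitting $i+j=k$, the cardinality condition $|A| = j$, and the binomial coefficients $\binom{v+n-l-2k}{v-k}$, $\binom{v+n-l-2k}{n-k}$, $\binom{v+n-2k}{n-k}$ --- is independent of $p$ and carries over verbatim.

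The one thing to verify is that the divisibility constraint behaves as claimed. In Theorem \ref{thmp4} the compositions $(a_1,\dots)$ are constrained by requiring the parts indexed by $A$ to satisfy $a_l = r p$ with $r \in \mathbb{N}$. When $p = 1$ this reads $a_l = r$ with $r \in \mathbb{N}$, which is automatic for any positive integer $a_l$; hence the condition collapses to $a_l \in \mathbb{N}$, exactly as recorded in the corollary, and no term is gained or lost by this relaxation. Collecting the three simplified sums then reproduces the displayed identity; alternatively one may instead put $p = 1$ in the word-level identity \eqref{equn11} of Theorem \ref{thm2} and apply the $\mathbb{Q}[t]$-linear map $Z^t$, obtaining the same result.

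Because the argument is a pure specialization, there is no substantive obstacle: the only care needed is bookkeeping --- ensuring the divisibility condition degenerates correctly and that the substitutions $p \mapsto 1$ in the shifted arguments, the repeated blocks, and the weight equalities are carried out consistently in all three sums simultaneously.
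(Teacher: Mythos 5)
Your proposal is correct and matches the paper's own derivation: the corollary is obtained precisely by setting $p=1$ in \eqref{equn12} of Theorem \ref{thmp4}, with the divisibility condition $a_l = rp$, $r \in \mathbb{N}$, collapsing to $a_l \in \mathbb{N}$ exactly as you observe. No further argument is needed.
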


	\section{Proof of Theorem $1$}
	
	We first define the combinatorial description of t-stuffle product.
Let $\mathbb{A}=\{k_1, k_2, \dots , k_n\}$ and $\mathbb{B}=\{l_1, l_2, \dots , l_m\}$ be two set of positive integers. Then 
	\begin{align}
		\label{p}
		& z_{k_1}z_{k_2} \dots z_{k_n} \s 	z_{l_1}z_{l_2} \dots z_{l_m} \nonumber \\
		&=\sum_{\substack{0 \leq k \leq \text{min}\{m,n\};\\ i+j=k,i,j \in \mathbb{Z}_{\geq 0}}} {(1-2t)}^i {(t^2-t)}^j \sum_{\sigma \in {\mathbb{P}}_{n,m,k,j}} z_{{\sigma}^{-1}(1)} z_{{\sigma}^{-1}(2)} \dots z_{{\sigma}^{-1}(n+m-k-j)},
	\end{align}
	where ${\mathbb{P}}_{n,m,k,j}$ is the set of all surjections 
	$$\sigma : \{1,2, \dots , n+m\} \rightarrow \{1,2, \dots , n+m-k-j\}$$
	with conditions
	\begin{align*}
		&(i) ~\sigma (1) \leq \sigma (2) \leq \dots \leq \sigma (n); \sigma (n+1) \leq \sigma (n+2) \leq \dots  \leq \sigma (n+m)\\
		&(ii)~ \text{For any } \sigma \in {\mathbb{P}}_{n,m,k,j} ~\text{and for} ~r \in \{1,2 , \dots ,n+m-k-j\},
	\end{align*}
	
	$$ z_{{\sigma}^{-1}(r)}=
	\begin{cases}
		z_s& \text{if $ {\sigma}^{-1}(r)=\{s\}$}\\
		z_{s_1+ \dots + s_u+{s'
			}_1+ \dots+ {s'}_{u'}}& \text{if ${\sigma^{-1}(r)=\{s_1, \dots , s_u, {s'}_1, \dots , {s'}_{u'}\}}$}
	\end{cases}
	$$
	where 			
	$$	\{s_1, \dots , s_u\} \subseteq \mathbb{A} ~\text{and}~ \{{s'}_1, \dots , {s'}_{u'}\} \subseteq \mathbb{B},~ u,u' \geq 1$$
	\begin{align*}
	\hspace*{1.6cm}	&(iii) ~\text{In } {{\sigma}^{-1}(r)}, |u-u'| \leq 1\\
		&(iv)~ \text{In each term, number of}~ z_{{\sigma}^{-1}(r)}, \text{where}~ u=u', ~\text{is given by the exponent} \\
			& \ \ \ \ \ \  \text{of}~ (1-2t).
	\end{align*}

	We now derive the following general t-stuffle product formula.
	\begin{proposition}
		\label{them1}
		For positive integers $m, n, k_{i'}, l_{j'}$, where $i'=1, 2, ...., n$ and $j'=1, 2, ...., m$, we have   
		\begin{align}
			\label{equain1}
			& z_{k_1}z_{k_2} \dots z_{k_n} \s 	z_{l_1}z_{l_2} \dots z_{l_m} \nonumber \\
			&=\sum_{\substack{0 \leq k \leq \text{min}\{m,n\};\\ i+j=k,i,j \in \mathbb{Z}_{\geq 0}}} {(1-2t)}^i {(t^2-t)}^j \sum_{\substack{a_1+ \dots + a_{n+m-k-j}\\ =\sum_{r=1}^{n} k_r+\sum_{s=1}^{m} l_s,\\
					{a_{w}}\in \{{k_r}, {l_s}, {s_1+ \dots + s_u+{s'
						}_1+ \dots +{s'}_{u'}}:\\ r=1, \dots , n; s=1, \dots , m;|u-u'| \leq 1\},\\ \text{For}~v\in\{k,l\},~ z_{v_b+\alpha} \text{ cannot be placed before}\\ z_{v_a+\beta} ~\text{if}~ a < b, \text{ where }\alpha, \beta \geq 0; \\ \text{ Number of}~ {a_w}\text{'s},~ \text{where}~ u=u' ~\text{is}~ i }} z_{a_1} z_{a_2} \dots z_{a_{n+m-k-j}},
		\end{align}
		where
		$$	\{s_1, \dots , s_u\} \subseteq \{k_1, k_2, \dots , k_n\} ~\text{and}~ \{{s'}_1, \dots , {s'}_{u'}\} \subseteq \{l_1, l_2, \dots , l_m\}, u,u' \geq 1.$$
	\end{proposition}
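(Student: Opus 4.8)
The essential idea is that Proposition~\ref{them1} is nothing more than a reorganization of the combinatorial description \eqref{p}: formula~\eqref{p} expresses the product as a sum over surjections $\sigma \in \mathbb{P}_{n,m,k,j}$, and \eqref{equain1} expresses the same thing as a sum over admissible \emph{sequences} of exponents $(a_1,\dots,a_{n+m-k-j})$. So the plan is to set up a bijection between the two indexing sets and check that corresponding monomials agree. I would fix $k$ and $j$ (hence $i=k-j$), and define a map $\Phi$ sending a surjection $\sigma \in \mathbb{P}_{n,m,k,j}$ to the tuple $(a_1,\dots,a_{n+m-k-j})$ with $a_r$ the integer attached to the block $\sigma^{-1}(r)$ by the case definition after \eqref{p}. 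One then has to verify that $\Phi(\sigma)$ satisfies every constraint listed under the inner sum of \eqref{equain1}, and conversely that every such tuple arises from exactly one $\sigma$.

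First I would check the two content/shape constraints: that $a_1+\dots+a_{n+m-k-j} = \sum k_r + \sum l_s$ (immediate, since the blocks $\sigma^{-1}(1),\dots,\sigma^{-1}(n+m-k-j)$ partition $\{1,\dots,n+m\}$, each $a_r$ is the sum of the corresponding $k$'s and $l$'s, and the total is preserved), and that each $a_w$ is of one of the three allowed forms with $|u-u'|\le 1$ — the singleton cases $a_w=k_r$ or $a_w=l_s$ correspond to $u=1,u'=0$ or $u=0,u'=1$, and a genuine merge gives $u,u'\ge 1$; condition~(iii) of the description after \eqref{p} is exactly $|u-u'|\le 1$. Next, the monotonicity constraint ``$z_{v_b+\alpha}$ cannot be placed before $z_{v_a+\beta}$ if $a<b$'' is precisely the encoding of condition~(i): since $\sigma$ is weakly increasing on $\{1,\dots,n\}$ and on $\{n+1,\dots,n+m\}$, a letter $k_a$ lands in a block with index $\le$ the index of the block containing $k_b$ whenever $a<b$, so in the output word the term absorbing $k_a$ appears no later than the one absorbing $k_b$ (and similarly for the $l$'s); conversely, any tuple respecting this ordering can have its blocks read off in increasing order and reassembled into a unique weakly-increasing-on-each-half surjection. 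Finally I would note that condition~(iv) — the number of blocks with $u=u'$ equals the exponent of $(1-2t)$, i.e.\ $i$ — matches the stated ``Number of $a_w$'s where $u=u'$ is $i$'' verbatim, and the $(1-2t)^i(t^2-t)^j$ weights are carried across unchanged. Assembling these, $\Phi$ is a bijection onto the inner index set of \eqref{equain1} that preserves the associated monomial, and summing over all $k,j$ gives the claim.

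The main obstacle, and the step deserving the most care, is the monotonicity/well-definedness part: one must argue that the ad hoc phrase ``$z_{v_b+\alpha}$ cannot be placed before $z_{v_a+\beta}$ if $a<b$'' really is equivalent to the order-preservation built into $\mathbb{P}_{n,m,k,j}$, and in particular that it correctly handles merged blocks (where a single output term may absorb several $k$'s and several $l$'s, so it sits simultaneously in the ``$k$-order'' and the ``$l$-order''). I would make this precise by defining, for each output position $r$, the sets of original $k$-indices and $l$-indices it absorbs, observing these are intervals once we know $\sigma$ is weakly increasing on each half, and checking that the interval structure is exactly what the tuple-side constraint enforces. The remaining points — the sum-of-exponents identity, the form of each $a_w$, and the bookkeeping of the $(1-2t)$ versus $(t^2-t)$ factors — are routine once the indexing bijection is in place, and I would present them briefly.
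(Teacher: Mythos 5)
Your proposal is correct and takes essentially the same route as the paper's primary proof: both relabel the block $\sigma^{-1}(w)$ from the combinatorial description \eqref{p} as $a_w$ and translate conditions (i)--(iv) into the constraints on the tuple $(a_1,\dots,a_{n+m-k-j})$, with your version being slightly more explicit about verifying the inverse direction of the correspondence. (The paper also gives a second, independent proof by induction on $m+n$, which you do not attempt, but that is not needed.)
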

	
	\begin{proof}
		In \eqref{p}, denote ${\sigma}^{-1}(w)$ by $a_w$ for any $w \in \{1,2 , \dots ,n+m-k-j\}$. Then it is easy to see that $a_1+ \dots + a_{n+m-k-j} =\sum_{r=1}^{n} k_r+\sum_{s=1}^{m} l_s$. By the condition (i) of \eqref{p}, we can see that 
		$z_{k_1}, \dots	, z_{k_n}$ preserves the relative order, i.e., for $a, b \in \{1, 2, \dots , n\}$ with $a < b$, $z_{k_b+\alpha}$ cannot be placed before $z_{k_a+\beta}$, where $\alpha, \beta \geq 0$. Similarly, for $a', b' \in \{1, 2, \dots , m\}$ with $a' < b'$, $z_{l_{b'}+\alpha}$ cannot be placed before $z_{l_{a'}+\beta}$. By the condition (ii) and (iii), for any $w \in \{1, 2, \dots ,n+m-k-j\}$, $a_w \in  \{{k_1}, \dots , k_n, {l_1}, \dots , l_m, {s_1+ \dots + s_u+{s'}_1+ \dots +{s'}_{u'}}\},$ where $	\{s_1, \dots , s_u\} \subseteq \{k_1, k_2, \dots , k_n\}$ and $\{{s'}_1, \dots , {s'}_{u'}\} \subseteq \{l_1, l_2, \dots , l_m\},$  with $u,u' \geq 1$ and $|u-u'| \leq 1.$
		Also, in each term, number of $a_w$'s where $u=u'$ is $i$. Hence the proof. 
	\end{proof}
	\textbf{Alternative proof:}
	Here we use induction on $m+n$. Assume that \eqref{equain1} holds for $u+v <m+n$. For both $m=1$ and $n=1$ it is obvious. Let one of them be $> 1$.
	 For positive integers $f, g, h$, define 
	\begin{align*}
		\mathbb{S}_f^{g,h}=\sum_{\substack{a_1+ \dots + a_{f}\\ =\sum_{r=g}^{n} k_r+\sum_{s=h}^{m} l_s\\
				{a_{w}}\in \{{k_r}, {l_s}, {s_1+ \dots + s_u+{s'
					}_1+ \dots +{s'}_{u'}}:\\ r=g, \dots , n; s=h, \dots , m;|u-u'| \leq 1\},\\
				\{s_1, \dots , s_u\} \subseteq \{ k_g, \dots , k_n\}, \{{s'}_1, \dots , {s'}_{u'}\} \subseteq \{l_h, \dots , l_m\};\\ \text{For}~v\in\{k,l\},~ z_{v_b+\alpha} \text{ cannot be placed before}\\ z_{v_a+\beta} ~\text{if}~ a < b, \text{ where }\alpha, \beta \geq 0; \\  \text{ Number of}~ {a_w}'s, \text{where}~ u=u', ~\text{is the exponent of}~ (1-2t)}} z_{a_1} z_{a_2} \dots z_{a_{f}}.
	\end{align*}
	By the definition of t-stuffle product, we have
	\begin{align*}
		& z_{k_1}z_{k_2} \dots z_{k_n} \s 	z_{l_1}z_{l_2} \dots z_{l_m}\\
		&=z_{k_1}(z_{k_2} \dots z_{k_n} \s 	z_{l_1}z_{l_2} \dots z_{l_m})+z_{l_1}(z_{k_1}z_{k_2} \dots z_{k_n} \s 	z_{l_2} \dots z_{l_m}) \nonumber \\
		& \ \ +\{(1-2t)z_{k_1+l_1}+(t^2-t)x^{k_1+l_1}\}(z_{k_2} \dots z_{k_n} \s 	z_{l_2} \dots z_{l_m}).
	\end{align*}
	Total number of $z_k$'s in all of the products $(z_{k_2} \dots z_{k_n} \s 	z_{l_1}z_{l_2} \dots z_{l_m}), (z_{k_1}z_{k_2} \dots z_{k_n} \s 	z_{l_2} \dots z_{l_m})$ and \\ $ (z_{k_2} \dots z_{k_n} \s 	z_{l_2} \dots z_{l_m})$ are less than $m+n$. Using inductive hypothesis in the above expression, we get
	\begin{align}
		\label{equ2}
		&\hspace{1em} z_{k_1}z_{k_2} \dots z_{k_n} \s 	z_{l_1}z_{l_2} \dots z_{l_m} \nonumber \\
		&  =z_{k_1}\sum_{\substack{0 \leq k \leq \text{min}\{m,n-1\};\\ i+j=k,i,j \in \mathbb{Z}_{\geq 0}}} {(1-2t)}^i {(t^2-t)}^j~\mathbb{S}_{n+m-k-j-1}^{2,1} + z_{l_1}\sum_{\substack{0 \leq k \leq\text{min}\{m-1,n\};\\ i+j=k,i,j \in \mathbb{Z}_{\geq 0}}} {(1-2t)}^i {(t^2-t)}^j~\mathbb{S}_{n+m-k-j-1}^{1,2} \nonumber \\
		&\ \ \ +\{(1-2t)z_{k_1+l_1}+(t^2-t)x^{k_1+l_1}\}  \sum_{\substack{0 \leq k \leq \text{min}\{m-1,n-1\};\\ i+j=k,i,j \in \mathbb{Z}_{\geq 0}}} {(1-2t)}^i {(t^2-t)}^j~\mathbb{S}_{n+m-k-j-2}^{2,2}. 
	\end{align}
Since the stuffle product $\s$ is commutative, we can assume that $m \geq n$. If $m=n$, then \eqref{equ2} becomes
	\begin{align*}
		&\hspace{1em} z_{k_1}z_{k_2} \dots z_{k_n} \s 	z_{l_1}z_{l_2} \dots z_{l_n} \nonumber \\
		&=\sum_{\substack{0 \leq k \leq n-1;\\ i+j=k,i,j \in \mathbb{Z}_{\geq 0}}} {(1-2t)}^i {(t^2-t)}^j\Bigg[ z_{k_1} \mathbb{S}_{2n-k-j-1}^{2,1}+ z_{l_1}\mathbb{S}_{2n-k-j-1}^{1,2}
		 \Bigg]\nonumber \\
		 & \ \ \ \ + \{(1-2t)z_{k_1+l_1}+(t^2-t)x^{k_1+l_1}\}  \sum_{\substack{1\leq k \leq n;\\ i+j=k-1,i,j \in \mathbb{Z}_{\geq 0}}} {(1-2t)}^i {(t^2-t)}^j~\mathbb{S}_{2n-k-j-1}^{2,2}
	\end{align*}
	\begin{align}
		\label{equ3}
		& =\sum_{\substack{a_1+ \dots + a_{2n}\\ =\sum_{r=1}^{n} k_r+\sum_{s=1}^{n} l_s;\\
				{a_{w}}\in \{{k_r}, {l_s}: r=1, \dots , n; s=1, \dots , n\};\\ \text{For}~ v \in \{k, l\},~ z_{v_b} \text{ placed after}~\\ z_{v_a}~ \text{if}~ a<b  }} z_{a_1} z_{a_2} \dots z_{a_{2n}}
		+ \{(1-2t)z_{k_1+l_1}+(t^2-t)x^{k_1+l_1}\} \sum_{\substack{ i+j=n-1,\\i,j \in \mathbb{Z}_{\geq 0}}} {(1-2t)}^i  \nonumber \\
		& \ \ \ \times   {(t^2-t)}^j ~\mathbb{S}_{n-j-1}^{2,2} + \sum_{\substack{1 \leq k \leq n-1}}\Bigg[ \sum_{\substack{ i+j=k,\\ i,j \in \mathbb{Z}_{\geq 0}}} {(1-2t)}^i {(t^2-t)}^j\Bigg\{ z_{k_1} \mathbb{S}_{2n-k-j-1}^{2,1}+ z_{l_1}\mathbb{S}_{2n-k-j-1}^{1,2}\Bigg\}\nonumber \\
		& \ \ \ + \{(1-2t)z_{k_1+l_1}+(t^2-t)x^{k_1+l_1}\} \sum_{\substack{ i+j=k-1,\\ i,j \in \mathbb{Z}_{\geq 0}}} {(1-2t)}^i {(t^2-t)}^j ~\mathbb{S}_{2n-k-j-1}^{2,2} 
	 \Bigg].
	\end{align}
It is obvious that,
	\begin{align}
		\label{equ4}
		& \sum_{\substack{ i+j=k,\\ i,j \in \mathbb{Z}_{\geq 0}}} {(1-2t)}^i {(t^2-t)}^j\Bigg\{ z_{k_1}\mathbb{S}_{m+n-k-j-1}^{2,1}+ z_{l_1}\mathbb{S}_{m+n-k-j-1}^{1,2}\Bigg\}\nonumber \\
		& \ \ \ + \{(1-2t)z_{k_1+l_1}+(t^2-t)x^{k_1+l_1}\} \sum_{\substack{ i+j=k-1,\\ i,j \in \mathbb{Z}_{\geq 0}}} {(1-2t)}^i {(t^2-t)}^j ~\mathbb{S}_{2n-k-j-1}^{2,2}\nonumber \\ 
		&=\sum_{\substack{ i+j=k,\\ i,j \in \mathbb{Z}_{\geq 0}}} {(1-2t)}^i {(t^2-t)}^j ~\mathbb{S}_{m+n-k-j}^{1,1} . 
	\end{align}
Using \eqref{equ4}  in \eqref{equ3}, in case of $m=n$ and $k=m=n$, we get
	\begin{align*}
		&\hspace{1em} z_{k_1}z_{k_2} \dots z_{k_n} \s 	z_{l_1}z_{l_2} \dots z_{l_n} \nonumber \\
		&=\sum_{\substack{a_1+ \dots + a_{2n}\\ =\sum_{r=1}^{n} k_r+\sum_{s=1}^{n} l_s\\
				{a_{w}}\in \{{k_r}, {l_s}: r=1, \dots , n; s=1, \dots , n\},\\ \text{For}~ v \in \{k, l\},~ z_{v_b} \text{ placed after}~\\ z_{v_a}~ \text{if}~ a<b  }} z_{a_1} z_{a_2} \dots z_{a_{2n}} \nonumber \\
		& \ \ \ +\sum_{\substack{ i+j=n,\\ i,j \in \mathbb{Z}_{\geq 0}}} {(1-2t)}^i {(t^2-t)}^j ~\mathbb{S}_{n-j}^{1,1}  + \sum_{\substack{1 \leq k \leq n-1; \\ i+j=k,\\ i,j \in \mathbb{Z}_{\geq 0}}} {(1-2t)}^i {(t^2-t)}^j ~\mathbb{S}_{2n-k-j}^{1,1}\nonumber \\
		& = \sum_{\substack{0 \leq k \leq n; \\ i+j=k, i,j \in \mathbb{Z}_{\geq 0}}} {(1-2t)}^i {(t^2-t)}^j~ \mathbb{S}_{2n-k-j}^{1,1}.
	\end{align*}
Hence $m=n$ case holds.\\
Let $ m > n$. Then \eqref{equ2} becomes
	\begin{align}
\label{equ8}
		&\hspace{1em} z_{k_1}z_{k_2} \dots z_{k_n} \s 	z_{l_1}z_{l_2} \dots z_{l_m} \nonumber \\
		&  =z_{k_1}\sum_{\substack{0 \leq k \leq n-1;\\ i+j=k,\\ i,j \in \mathbb{Z}_{\geq 0}}} {(1-2t)}^i {(t^2-t)}^j ~\mathbb{S}_{m+n-k-j-1}^{2,1}   + z_{l_1}\sum_{\substack{0 \leq k \leq n;\\ i+j=k,\\ i,j \in \mathbb{Z}_{\geq 0}}} {(1-2t)}^i {(t^2-t)}^j ~\mathbb{S}_{m+n-k-j-1}^{1,2} \nonumber \\
		&\ \ \ +\{(1-2t)z_{k_1+l_1}+(t^2-t)x^{k_1+l_1}\}  \sum_{\substack{0 \leq k \leq n-1;\\ i+j=k,\\ i,j \in \mathbb{Z}_{\geq 0}}} {(1-2t)}^i {(t^2-t)}^j ~\mathbb{S}_{m+n-k-j-2}^{2,2}\nonumber \\
		& = z_{l_1}\sum_{\substack{ i+j=n, \\ i,j \in \mathbb{Z}_{\geq 0}}} {(1-2t)}^i {(t^2-t)}^j ~\mathbb{S}_{m-j-1}^{1,2} \nonumber \\
		& \ \ \ + \sum_{\substack{0 \leq k \leq n-1;\\ i+j=k, \\ i,j \in \mathbb{Z}_{\geq 0}}} {(1-2t)}^i {(t^2-t)}^j \Bigg[z_{k_1} \mathbb{S}_{m+n-k-j-1}^{2,1}
		 +  z_{l_1}~ \mathbb{S}_{m+n-k-j-1}^{1,2} \Bigg]\nonumber \\
		 & \ \ \ + \{(1-2t)z_{k_1+l_1}+(t^2-t)x^{k_1+l_1}\}  \sum_{\substack{1 \leq k \leq n;\\ i+j=k-1,i,j \in \mathbb{Z}_{\geq 0}}} {(1-2t)}^i {(t^2-t)}^j ~\mathbb{S}_{n+m-k-j-1}^{2,2} 
		 \nonumber \\	
		&=\sum_{\substack{a_1+ \dots + a_{m+n}\\ =\sum_{r=1}^{n} k_r+\sum_{s=1}^{m} l_s\\
				{a_{w}}\in \{{k_r}, {l_s}: r=1, \dots , n; s=1, \dots , m\},\\ \text{For}~ v \in \{k, l\},~ z_{v_b} \text{ placed after}~\\ z_{v_a}~ \text{if}~ a<b  }} z_{a_1} z_{a_2} \dots z_{a_{m+n}} + 	z_{l_1}\sum_{\substack{ i+j=n, \\ i,j \in \mathbb{Z}_{\geq 0}}} {(1-2t)}^i {(t^2-t)}^j ~\mathbb{S}_{m-j-1}^{1,2}\nonumber \\ 
		& \ \ + \{(1-2t)z_{k_1+l_1}+(t^2-t)x^{k_1+l_1}\}    \sum_{\substack{ i+j=n-1,\\i,j \in \mathbb{Z}_{\geq 0}}} {(1-2t)}^i {(t^2-t)}^j ~\mathbb{S}_{m-j-1}^{2,2} \nonumber \\
		& \ \ + \sum_{\substack{1 \leq k \leq n-1}}\Bigg[ \sum_{\substack{ i+j=k,\\ i,j \in \mathbb{Z}_{\geq 0}}} {(1-2t)}^i {(t^2-t)}^j\Bigg\{ z_{k_1} \mathbb{S}_{m+n-k-j-1}^{2,1}+z_{l_1}~\mathbb{S}_{m+n-k-j-1}^{1,2}\Bigg\}\nonumber \\
		& \ \  + \{(1-2t)z_{k_1+l_1}+(t^2-t)x^{k_1+l_1}\} \sum_{\substack{ i+j=k-1,\\ i,j \in \mathbb{Z}_{\geq 0}}} {(1-2t)}^i {(t^2-t)}^j~ \mathbb{S}_{m+n-k-j-1}^{2,2} \Bigg].
	\end{align}
Using \eqref{equ4} in \eqref{equ8}, we get
	\begin{align*}
		 &\hspace{1em} z_{k_1}z_{k_2} \dots z_{k_n} \s 	z_{l_1}z_{l_2} \dots z_{l_m} \nonumber \\
		&=\sum_{\substack{a_1+ \dots + a_{m+n}\\ =\sum_{r=1}^{n} k_r+\sum_{s=1}^{m} l_s\\
				{a_{w}}\in \{{k_r}, {l_s}: r=1, \dots , n; s=1, \dots , m\},\\ \text{For}~ v \in \{k, l\},~ z_{v_b} \text{ placed after}~\\ z_{v_a}~ \text{if}~ a<b }} z_{a_1} z_{a_2} \dots z_{a_{m+n}}\nonumber \\
		& \ \ \ + \sum_{\substack{ i+j=n, \\ i,j \in \mathbb{Z}_{\geq 0}}} {(1-2t)}^i {(t^2-t)}^j~
		\mathbb{S}_{m-j}^{1,1}+ \sum_{\substack{1 \leq k \leq n-1}} \sum_{\substack{ i+j=k,\\ i,j \in \mathbb{Z}_{\geq 0}}} {(1-2t)}^i {(t^2-t)}^j ~\mathbb{S}_{m+n-k-j}^{1,1} \nonumber \\
	\end{align*}
\begin{align*}
		& =\sum_{\substack{0 \leq k \leq n}} \sum_{\substack{ i+j=k,\\ i,j \in \mathbb{Z}_{\geq 0}}} {(1-2t)}^i {(t^2-t)}^j ~\mathbb{S}_{m+n-k-j}^{1,1}.
	\end{align*}
This completes the proof.\\

	For $t=0$, we get the following general usual stuffle product formula.
	\begin{corollary}
		For positive integers $m, n, k_i, l_j$, where $i=1, 2, ...., n$ and $j=1, 2, ...., m$, we have   
		\begin{align*}
			& z_{k_1}z_{k_2} \dots z_{k_n} \sh 	z_{l_1}z_{l_2} \dots z_{l_m} \\
			&=\sum_{\substack{0 \leq k \leq \text{min}\{m,n\};\\a_1+ \dots + a_{n+m-k}\\ =\sum_{r=1}^{n} k_r+\sum_{s=1}^{m} l_s,\\
					{a_{w}}\in \{{k_r}, {l_s}, {k_r+l_s} \},\\ \text{For}~v\in\{k,l\},~ z_{v_b+\alpha} \text{ cannot be placed before}\\ z_{v_a+\beta} ~\text{if}~ a < b, \text{ where }\alpha, \beta \geq 0; \\  \text{ Number of}~ z_{k_r+l_s}\text{'s} ~\text{is}~ k }} z_{a_1} z_{a_2} \dots z_{a_{n+m-k}}.
		\end{align*}
	\end{corollary}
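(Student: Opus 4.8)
This corollary is the specialization $t=0$ of Proposition \ref{them1}, so the plan is simply to substitute $t=0$ into \eqref{equain1} and read off what survives. First I would note that at $t=0$ one has $(1-2t)^i = 1$ for every $i\geq 0$, while $(t^2-t)^j = 0^j$, which equals $1$ when $j=0$ and $0$ when $j\geq 1$. Hence in the outer sum of \eqref{equain1} every term with $j\geq 1$ drops out, and only the terms with $j=0$ (so that $i=k$) remain, each carrying coefficient $1$; here $k$ still ranges over $0\leq k\leq \min\{m,n\}$, and the target word now has length $n+m-k-j=n+m-k$.

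Next I would show that, once $j=0$, every multi-letter block $z_{a_w}$ occurring in the inner sum is forced to be a pairwise merge $z_{k_r+l_s}$. By the index conditions of \eqref{equain1} the number of blocks with $u=u'$ is $i=k$, each multi-letter block has $u,u'\geq 1$ with $|u-u'|\leq 1$, and, since the underlying surjection now has image of size $n+m-k$, exactly $k$ letters are absorbed in total. A block with $u=u'$ absorbs $2u-1\geq 1$ letters, while a block with $|u-u'|=1$ absorbs $u+u'-1\geq 2$ letters; as there are already $k$ blocks of the first kind, a total of only $k$ absorbed letters is attainable precisely when each of those blocks has $u=u'=1$ and there are no blocks of the second kind. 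Therefore every merged block is of the form $z_{k_r+l_s}$ for a single pair $(r,s)$, and there are exactly $k$ of them.

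Finally I would record the conditions that survive: $a_1+\dots+a_{n+m-k}=\sum_{r=1}^n k_r+\sum_{s=1}^m l_s$, each $a_w\in\{k_r,l_s,k_r+l_s\}$, the relative orders of $z_{k_1},\dots,z_{k_n}$ and of $z_{l_1},\dots,z_{l_m}$ are preserved, and the number of $z_{k_r+l_s}$'s equals $k$. This is exactly the asserted identity, which is the classical stuffle (harmonic) product formula for multiple zeta values; see also \cite{4}. The only step requiring an argument rather than a direct substitution is the short counting above that collapses the general $t$-stuffle merges to pairwise merges, and this is not a genuine obstacle.
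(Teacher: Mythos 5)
Your proposal is correct and follows essentially the same route as the paper: set $t=0$ in Proposition \ref{them1}, observe that only the terms with $j=0$, $i=k$ survive, and then use a counting argument on absorbed letters to force every merged block to be a single pairwise merge $z_{k_r+l_s}$. Your version of the counting step (summing $u+u'-1$ over all blocks and comparing with the total excess $k$) is in fact a cleaner and more complete rendering of the paper's ``without loss of generality'' contradiction, since it also explicitly rules out blocks with $|u-u'|=1$.
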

	
	\begin{proof}
		If we take $t=0$ in Proposition \ref{them1}, then we have $j=0,i=k$. In this case, there are $n+m-k$ positions for $a_w$ and $k$ numbers of $a_w$ are of the form ${s_1+ \dots + s_u+{s'
			}_1+ \dots +{s'}_{u}}$. Suppose out of $k$ numbers of such $a_w$, for atleast one  $a_w$, $u=2$. Without loss of generality, suppose the first $k-1$ arguments are involved with  $z_{a_{w}}$ where $u=1$ and the next two arguments are involved with $z_{a_{w}}$ where $u=2$. That is $k_1, \dots, k_{k-1}, l_1 , \dots , l_{k-1}$ are involved with $z_{a_{w}}$ where $u=1$, and $k_k, k_{k+1}, l_k, l_{k+1}$ are involved with  $z_{a_{w}}$ where $u=2$. So the total weight of these arguments is $k_1+ \dots + k_{k+1}+l_1+ \dots + l_{k+1}$. The remaining number of positions for $a_w$ is $n+m-2k$ whereas the remaining number of arguments involved is $n-k-1+m-k-1=n+m-2k-2$, which is a contradiction. Hence there are $k$ number of $z_{a_w} $ of the form $z_{k_r+l_s}$.
	\end{proof}
Now we obtain a lemma which plays a pivotal role in proving both Theorem $1$ and Theorem $2$.
	\begin{lemma}
		\label{lem2}
		For integers $m \geq 0, n \geq 0$, and $p \geq 1$, we have 
		\begin{align}
			\label{equn1}
			&\hspace{1em}{z_p}^m \s {z_p}^n \nonumber \\
			& = \sum_{\substack{0 \leq k \leq min\{m,n\};\\ i+j=k; i,j \in \mathbb{Z}_{\geq 0}\\a_1 + \dots +a_{n+m-i-k}=(n+m)p;\\a_l=rp,r \in \mathbb{N};\\ \text{Number of}~ a_l\text{'s } \text{which are even multiple of p is j} }} \binom{m+n-2k}{m-k}{(t^2-t)}^i{(1-2t)}^j z_{a_1} \dots z_{a_{n+m-i-k}}.
		\end{align}
	\end{lemma}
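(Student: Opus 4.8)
The plan is to specialize Proposition~\ref{them1} to the case where all the letters coincide, i.e.\ $k_1=\dots=k_m=p$ and $l_1=\dots=l_n=p$, and then reorganize the resulting sum. First I would set $\mathbb{A}=\{k_1,\dots,k_m\}$ with every $k_r=p$ and $\mathbb{B}=\{l_1,\dots,l_n\}$ with every $l_s=p$, and apply \eqref{equain1}. In this degenerate situation the general term $z_{s_1+\dots+s_u+s'_1+\dots+s'_{u'}}$ collapses to $z_{(u+u')p}$; since $|u-u'|\le 1$, a merged letter contributes either $z_{2rp}$ (when $u=u'=r$, an even multiple of $p$) or $z_{(2r+1)p}$ (when $|u-u'|=1$). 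Thus every $a_w$ in the word is automatically a positive multiple of $p$, the total weight condition becomes $a_1+\dots+a_{n+m-i-k}=(n+m)p$, and the requirement ``number of $a_w$'s with $u=u'$ equals $i$'' is exactly ``number of $a_l$'s that are even multiples of $p$ equals $i$''. Wait — I must be careful about the roles of $i$ and $j$: in \eqref{equain1} the exponent of $(1-2t)$ is $i$ and that is the count of $a_w$'s with $u=u'$, while in \eqref{equn1} the exponent of $(1-2t)$ is $j$; so I would match the $i$ of the Proposition with the $j$ of the Lemma (a harmless relabeling, both summed over $i+j=k$).

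The one genuinely substantive point is the appearance of the binomial coefficient $\binom{m+n-2k}{m-k}$, which is \emph{not} present in Proposition~\ref{them1}: there the inner sum ranges over genuinely distinct words, whereas here, because all letters are equal, many distinct surjections $\sigma$ produce the \emph{same} word $z_{a_1}\cdots z_{a_{n+m-i-k}}$, and the binomial coefficient counts that multiplicity. So the heart of the argument is a counting lemma: fix a target word $w=z_{a_1}\cdots z_{a_{n+m-i-k}}$ with exactly $i$ letters being even multiples of $p$ (hence $i+j=k$ merges in total, $j$ of them ``unbalanced''); I need to count how many $\sigma\in\mathbb{P}_{m,n,k,j}$ (in the Proposition's notation, i.e.\ $k$ merges of which the number with $u=u'$ is the $(1-2t)$-exponent) yield this word. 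After the positions of the merged letters are fixed, what remains is to distribute the $m$ copies of $p$ from $\mathbb{A}$ and the $n$ copies from $\mathbb{B}$ into the slots respecting the order conditions (i); because within each of $\mathbb{A}$ and $\mathbb{B}$ all letters are identical, the only freedom is \emph{how many} $\mathbb{A}$-letters versus $\mathbb{B}$-letters fall into each slot, and for a balanced merge ($z_{2rp}$, forced $u=u'=r$) or an unbalanced merge ($z_{(2r+1)p}$, two choices $u=r,u'=r+1$ or $u=r+1,u'=r$) or a singleton ($z_p$, either from $\mathbb{A}$ or from $\mathbb{B}$), one checks the number of consistent assignments is exactly $\binom{m+n-2k}{m-k}$ — intuitively, after ``using up'' $2k$ letters in a forced-balanced way among the merges, the remaining $m+n-2k$ letters must be split with $m-k$ coming from $\mathbb{A}$, and the order conditions make that split unique once the multiset of counts per slot is chosen, so the count is the number of ways to choose which $m-k$ of the $m+n-2k$ free letter-tokens are $\mathbb{A}$-tokens.

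I would present this either by a direct bijective/counting argument as above, or — and this is probably cleaner for the write-up — by induction on $m+n$ using the defining recursion of $\ast$ exactly as in the alternative proof of Proposition~\ref{them1}. In the inductive approach one writes ${z_p}^m\ast{z_p}^n = z_p({z_p}^{m-1}\ast{z_p}^n) + z_p({z_p}^m\ast{z_p}^{n-1}) + \{(1-2t)z_{2p}+(t^2-t)x^{2p}\}({z_p}^{m-1}\ast{z_p}^{n-1})$, substitutes the inductive formula \eqref{equn1} into each of the three pieces, and collects coefficients of a fixed word $z_{a_1}\cdots z_{a_{n+m-i-k}}$; the binomial identity $\binom{m+n-2k}{m-k}=\binom{m+n-1-2k}{m-1-k}+\binom{m+n-1-2k}{m-k}$ together with the shift $\binom{m+n-2k}{m-k}=\binom{(m-1)+(n-1)-2(k-1)}{(m-1)-(k-1)}$ for the merged-letter term is precisely what makes the coefficients on both sides agree, and the base cases $m=0$ or $n=0$ are immediate since then only $k=0$ survives and $\binom{n}{0}=1$. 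Either way, the main obstacle — and the step deserving the most care — is the bookkeeping that identifies the binomial coefficient: verifying that exactly $\binom{m+n-2k}{m-k}$ surjections (equivalently, merge-and-distribute patterns) collapse to each admissible word, and that the parity condition on the merged letters matches the split of $k$ into $i+j$. The order constraints from \eqref{equain1} do no work here precisely because all letters are equal, so it is really a pure counting statement, and I would isolate it as a short combinatorial sublemma before assembling the proof.
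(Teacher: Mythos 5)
Your proposal is correct and matches the paper's treatment essentially verbatim: the paper's main proof likewise specializes Proposition~\ref{them1} to the all-$p$ case and identifies the multiplicity coefficient $c_{ij}=\binom{m+n-2k}{m-k}$ by the same counting of how the $m+n-2k$ unmerged copies of $z_p$ split between the two factors (computing $c_{0k}$ and arguing the coefficient is independent of the split of $k$ into $i+j$). Your second route is exactly the paper's stated alternative proof by induction on $m+n$ via the defining recursion and Pascal's identity.
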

	
	\begin{proof}
		Suppose $|A|$ denotes the number of $a_l$'s which are even multiple of $p$.
		From \eqref{equain1}, we have
		\begin{align} 
			&\hspace{1em}{z_p}^m \s {z_p}^n \nonumber \\
			& = \sum_{\substack{0 \leq k \leq \text{min}\{m,n\};\\ i+j=k; i,j \in \mathbb{Z}_{\geq 0}\\a_1 + \dots +a_{n+m-i-k}=(n+m)p;\\a_l=rp;r \in \mathbb{N}\\ |A|= j }} c_{ij} {(t^2-t)}^i{(1-2t)}^j  z_{a_1} \dots z_{a_{n+m-i-k}}.
		\end{align}
		We only need to determine $c_{ij}$. We can see that for any  $k$, all $c_{ij}$ are equal. When $i=0$, we have $ j=k$ and in the summation, there are $k$ times $z_{2p}$ (since only possible even $a_j$ is $2p$). So there are $(m+n-2k)$ times $z_p$ in the summation in which $(m-k)$ times $z_p$ are from ${z_p}^m$ and others are from  ${z_p}^n$. Hence the coefficient $c_{0k}$ is $ \binom{m+n-2k}{m-k}$. Therefore the coefficient $c_{ij}$ is given by $ \binom{m+n-2k}{m-k}$. This completes the proof.
	\end{proof}
\textbf{Alternative proof:}
	Here we use induction on $m+n$. Assume that \eqref{equn1} holds for $u+v <m+n$. For both $m=1$ and $n=1$ it is obvious. Let one of them be greater than $ 1$.
	By the definition of t-stuffle product, we have
	\begin{align*}
		&{z_p}^m \s {z_p}^n \\
		&=z_p({z_p}^{m-1} \s {z_p}^n)+ z_p({z_p}^{m} \s {z_p}^{n-1})+(1-2t) z_{2p}({z_p}^{m-1} \s {z_p}^{n-1})+(t^2-t) x^{2p}({z_p}^{m-1} \s {z_p}^{n-1}).
	\end{align*}
	The sums of degrees of the factors  ${z_p}^{m-1} \s {z_p}^{n}, {z_p}^{m} \s {z_p}^{n-1} \text{and}~ {z_p}^{m-1} \s {z_p}^{n-1}$ are all less than $m+n$.
	By the inductive hypothesis, we have
	\begin{align}
		\label{equn2}
		&\hspace{1em}{z_p}^m \s {z_p}^n \nonumber \\
		&=z_p \sum_{\substack{0 \leq k \leq min\{m-1,n\};\\ i+j=k; i,j \in \mathbb{Z}_{\geq 0}\\a_1 + \dots +a_{n+m-i-k-1}\\=(n+m-1)p;\\a_l=rp,r \in \mathbb{N}; |A|= j} } \binom{m+n-2k-1}{m-k-1}{(t^2-t)}^i{(1-2t)}^j z_{a_1} \dots z_{a_{n+m-i-k-1}} \nonumber \\
		&\ \ \ +z_p \sum_{\substack{0 \leq k \leq min\{m,n-1\};\\ i+j=k; i,j \in \mathbb{Z}_{\geq 0}\\a_1 + \dots +a_{n+m-i-k-1}\\=(n+m-1)p;\\a_l=rp,r \in \mathbb{N}; |A|= j} } \binom{m+n-2k-1}{m-k}{(t^2-t)}^i{(1-2t)}^j z_{a_1} \dots z_{a_{n+m-i-k-1}} \nonumber \\
		&\ \ \ +\{(1-2t)z_{2p}+(t^2-t)x^{2p}\} \sum_{\substack{0 \leq k \leq min\{m-1,n-1\};\\ i+j=k; i,j \in \mathbb{Z}_{\geq 0}\\a_1 + \dots +a_{n+m-i-k-2}\\=(n+m-2)p;\\a_l=rp,r \in \mathbb{N}; |A|= j} } \binom{m+n-2k-2}{m-k-1}{(t^2-t)}^i{(1-2t)}^j \nonumber \\
		& \hspace{5em} \times z_{a_1} \dots z_{a_{n+m-i-k-2}}. 
	\end{align}
Suppose, for positive integers $d$ and $e$, 
\begin{align*}
	\mathbb{F}_d^e=\sum_{\substack{a_1 + \dots +a_{d}=ep;\\a_l=rp,r \in \mathbb{N}; |A|= j} } z_{a_1} \dots z_{a_{d}}
\end{align*}
Since the stuffle product $\s$ is commutative, we can assume that $m \geq n$. If $m=n$, then \eqref{equn2} becomes
	\begin{align*}
		&\hspace{1em}{z_p}^n \s {z_p}^n \nonumber \\
		&=z_p \sum_{\substack{0 \leq k \leq {n-1};\\ i+j=k; i,j \in \mathbb{Z}_{\geq 0}} } \binom{2n-2k-1}{n-k-1}{(t^2-t)}^i{(1-2t)}^j~	\mathbb{F}_{2n-i-k-1}^{2n-1}  \nonumber \\
		&\ \ \ +z_p \sum_{\substack{0 \leq k \leq {n-1};\\ i+j=k; i,j \in \mathbb{Z}_{\geq 0}} } \binom{2n-2k-1}{n-k}{(t^2-t)}^i{(1-2t)}^j~\mathbb{F}_{2n-i-k-1}^{2n-1} \nonumber \\
		&\ \ \ +\{(1-2t)z_{2p}+(t^2-t)x^{2p}\} \sum_{\substack{0 \leq k \leq {n-1};\\ i+j=k; i,j \in \mathbb{Z}_{\geq 0}} } \binom{2n-2k-2}{n-k-1}{(t^2-t)}^i{(1-2t)}^j~\mathbb{F}_{2n-i-k-2}^{2n-2} \nonumber\\
			\end{align*}
	\begin{align}
	\label{equn3}
		& = z_p \sum_{\substack{0 \leq k \leq {n-1};\\ i+j=k; i,j \in \mathbb{Z}_{\geq 0}} } \binom{2n-2k}{n-k}{(t^2-t)}^i{(1-2t)}^j ~ \mathbb{F}_{2n-i-k-1}^{2n-1} \nonumber \\
		&\ \ \ +\{(1-2t)z_{2p}+(t^2-t)x^{2p}\} \sum_{\substack{1 \leq k \leq {n};\\ i+j=k-1; i,j \in \mathbb{Z}_{\geq 0}} } \binom{2n-2k}{n-k}{(t^2-t)}^i{(1-2t)}^j ~\mathbb{F}_{2n-i-k-1}^{2n-2} \nonumber\\
		& =z_p \binom{2n}{n} {z_p}^{2n-1} + \{(1-2t)z_{2p}+(t^2-t)x^{2p}\}\sum_{\substack{i+j=n-1; i,j \in \mathbb{Z}_{\geq 0}} } {(t^2-t)}^i{(1-2t)}^j \mathbb{F}_{n-i-1}^{2n-2} \nonumber \\
		& \ \ \ +\sum_{1 \leq k \leq {n-1}}\binom{2n-2k}{n-k}\bigg[z_p \sum_{\substack{ i+j=k; i,j \in \mathbb{Z}_{\geq 0}} } {(t^2-t)}^i{(1-2t)}^j~\mathbb{F}_{2n-i-k-1}^{2n-1}\nonumber \\
		&\ \ \ \ \ \hspace{1em} +\{(1-2t)z_{2p}+(t^2-t)x^{2p}\} \sum_{\substack{i+j=k; i,j \in \mathbb{Z}_{\geq 0}} } {(t^2-t)}^i{(1-2t)}^j ~\mathbb{F}_{2n-i-k-1}^{2n-2} \bigg].
	\end{align}
It is easy to see that,
	\begin{align}
		\label{equn4}
		&z_p \sum_{\substack{ i+j=k; i,j \in \mathbb{Z}_{\geq 0}} } {(t^2-t)}^i{(1-2t)}^j~\mathbb{F}_{m+n-i-k-1}^{m+n-1} \nonumber \\
		&\ \  +\{(1-2t)z_{2p}+(t^2-t)x^{2p}\} \sum_{\substack{i+j=k-1; i,j \in \mathbb{Z}_{\geq 0}} } {(t^2-t)}^i{(1-2t)}^j \mathbb{F}_{m+n-i-k-1}^{m+n-2} \nonumber \\
		&= \sum_{\substack{ i+j=k; i,j \in \mathbb{Z}_{\geq 0}} } {(t^2-t)}^i{(1-2t)}^j ~\mathbb{F}_{m+n-i-k}^{m+n}.
	\end{align}
Using \eqref{equn4} in \eqref{equn3}, in case of $m=n $ and $k=m=n$, we get
	\begin{align*}
		&\hspace{1em}{z_p}^n \s {z_p}^n \nonumber \\
		&=\binom{2n}{n} {z_p}^{2n} + \sum_{\substack{i+j=n; i,j \in \mathbb{Z}_{\geq 0}} } {(t^2-t)}^i{(1-2t)}^j~\mathbb{F}_{n-i}^{2n} \nonumber \\
		& \ \ \ \ + \sum_{1 \leq k \leq {n-1}}\binom{2n-2k}{n-k} \sum_{\substack{ i+j=k; i,j \in \mathbb{Z}_{\geq 0}} } {(t^2-t)}^i{(1-2t)}^j~\mathbb{F}_{2n-i-k}^{2n} \nonumber \\
		&=\sum_{0 \leq k \leq {n}}\binom{2n-2k}{n-k} \sum_{\substack{ i+j=k; i,j \in \mathbb{Z}_{\geq 0}} } {(t^2-t)}^i{(1-2t)}^j ~\mathbb{F}_{2n-i-k}^{2n}.	
	\end{align*}
Therefore the case of $m=n$ holds. \\
Let $m > n $. Then \eqref{equn2} becomes
	\begin{align*}
		&\hspace{1em}{z_p}^m \s {z_p}^n \nonumber \\
		&=z_p \sum_{\substack{0 \leq k \leq {n};\\ i+j=k; i,j \in \mathbb{Z}_{\geq 0}} } \binom{m+n-2k-1}{m-k-1}{(t^2-t)}^i{(1-2t)}^j~ \mathbb{F}_{m+n-i-k-1}^{m+n-1} \nonumber \\
		&\ \ \ +z_p \sum_{\substack{0 \leq k \leq {n-1};\\ i+j=k; i,j \in \mathbb{Z}_{\geq 0}} } \binom{m+n-2k-1}{m-k}{(t^2-t)}^i{(1-2t)}^j ~ \mathbb{F}_{m+n-i-k-1}^{n+m-1} \nonumber \\
	\end{align*}
\begin{align}
\label{equn8}
		&\ \ \ +\{(1-2t)z_{2p}+(t^2-t)x^{2p}\} \sum_{\substack{0 \leq k \leq {n-1};\\ i+j=k; i,j \in \mathbb{Z}_{\geq 0}} } \binom{m+n-2k-2}{m-k-1}{(t^2-t)}^i{(1-2t)}^j ~\mathbb{F}_{m+n-i-k-2}^{m+n-2} \nonumber \\
		&=z_p \sum_{\substack{ i+j=n; i,j \in \mathbb{Z}_{\geq 0}} } {(t^2-t)}^i{(1-2t)}^j ~\mathbb{F}_{m-i-1}^{m+n-1}\nonumber \\
		&\ \ \ +z_p \sum_{\substack{0 \leq k \leq {n-1};\\ i+j=k; i,j \in \mathbb{Z}_{\geq 0}} } \binom{m+n-2k}{m-k}{(t^2-t)}^i{(1-2t)}^j \mathbb{F}_{m+n-i-k-1}^{m+n-1} \nonumber \\
		&\ \ \ +\{(1-2t)z_{2p}+(t^2-t)x^{2p}\} \sum_{\substack{1 \leq k \leq {n};\\ i+j=k-1; i,j \in \mathbb{Z}_{\geq 0}} } \binom{m+n-2k}{m-k}{(t^2-t)}^i{(1-2t)}^j~\mathbb{F}_{m+n-i-k-1}^{n+m-2} \nonumber \\
		&= \binom{m+n}{m} {z_p}^{n+m} + \sum_{1 \leq k \leq {n-1}} \binom{m+n-2k}{m-k} 
        \bigg[z_p \sum_{\substack{1 \leq k \leq {n-1};\\ i+j=k; i,j \in \mathbb{Z}_{\geq 0}} } {(t^2-t)}^i{(1-2t)}^j ~\mathbb{F}_{m+n-i-k-1}^{m+n-1} \nonumber \\
		&\ \ \ +\{(1-2t)z_{2p}+(t^2-t)x^{2p}\} \sum_{\substack{1 \leq k \leq {n-1};\\ i+j=k-1; i,j \in \mathbb{Z}_{\geq 0}} } {(t^2-t)}^i{(1-2t)}^j~\mathbb{F}_{m+n-i-k-1}^{m+n-2} \bigg] \nonumber \\
		& \ \ \ + \bigg[z_p \sum_{\substack{ i+j=n; i,j \in \mathbb{Z}_{\geq 0}} } {(t^2-t)}^i{(1-2t)}^j ~\mathbb{F}_{m-i-1}^{m+n-1}  \nonumber \\
		&\ \ \ +\{(1-2t)z_{2p}+(t^2-t)x^{2p}\} \sum_{\substack{ i+j=n-1; i,j \in \mathbb{Z}_{\geq 0}} } \binom{m+n-2k}{m-k}{(t^2-t)}^i{(1-2t)}^j ~\mathbb{F}_{m-i-1}^{m+n-2}\bigg].
	\end{align}
Using \eqref{equn4} in \eqref{equn8}, we get
	\begin{align*}
		&\hspace{1em}{z_p}^m \s {z_p}^n \nonumber \\
		&=\binom{m+n}{m} {z_p}^{m+n}   + \sum_{1 \leq k \leq {n-1}}\binom{m+n-2k}{m-k} \sum_{\substack{ i+j=k; i,j \in \mathbb{Z}_{\geq 0}} } {(t^2-t)}^i{(1-2t)}^j ~ \mathbb{F}_{m+n-i-k}^{m+n} \\
		& \ \ + \sum_{\substack{i+j=n; i,j \in \mathbb{Z}_{\geq 0}} } {(t^2-t)}^i{(1-2t)}^j ~\mathbb{F}_{m-i}^{m+n} \nonumber \\
		&=\sum_{0 \leq k \leq {n}}\binom{m+n-2k}{m-k} \sum_{\substack{ i+j=k; i,j \in \mathbb{Z}_{\geq 0}} } {(t^2-t)}^i{(1-2t)}^j ~\mathbb{F}_{m+n-i-k}^{m+n}.	
	\end{align*}
Hence the proof.
\vspace{.5cm}		
	
	
\begin{lemma}
		\label{lem3}
		For integers $n \geq 2, p \geq 1, k \geq 0 $ and $m \geq 0$, we have
		\begin{align}
			\label{equn9}
			& z_n{z_p}^k \s {z_p}^m \nonumber \\
			& = \sum_{l=0}^{m} \big\{{z_p}^lz_n+(1-2t){z_p}^{l-1}z_{n+p}+(1-\delta_{k,0}\delta_{m,l})(t^2-t){z_p}^{l-1}x^{n+p}\big\} ({z_p}^k \s {z_p}^{m-l}),
		\end{align}
		where $\delta_{i,j}$ is the Kronecker's delta symbol defined as
		$$ \delta_{i,j}=
		\begin{cases}
			1& \text{if $i=j$}\\
			0& otherwise
		\end{cases}
		$$
		and we consider ${z_p}^{-1}=0$.
	\end{lemma}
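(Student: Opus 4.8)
The plan is to prove \eqref{equn9} by induction on $m$, using the defining recursion (ii) of the t-stuffle product. Note that the explicit evaluation of ${z_p}^k \s {z_p}^{m-l}$ supplied by Lemma \ref{lem2} is \emph{not} needed here: those factors can be carried along formally throughout. For the base case $m=0$, the right-hand side of \eqref{equn9} reduces to its single summand $l=0$, and since we set ${z_p}^{-1}=0$ the terms carrying $z_{n+p}$ and $x^{n+p}$ vanish, leaving $z_n({z_p}^k \s 1)=z_n{z_p}^k$, which is the left-hand side.

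For the inductive step, fix $m\geq 1$, assume \eqref{equn9} holds with $m$ replaced by $m-1$ (for every $k\geq 0$), and write ${z_p}^m=z_p\,{z_p}^{m-1}$. Since $n,p\geq 1$ and ${z_p}^k,{z_p}^{m-1}\in A^*\cap\H$, rule (ii) applies and gives
\begin{align*}
z_n{z_p}^k \s {z_p}^m &= z_n\big({z_p}^k \s {z_p}^m\big) + z_p\big(z_n{z_p}^k \s {z_p}^{m-1}\big) + (1-2t)z_{n+p}\big({z_p}^k \s {z_p}^{m-1}\big) \\
&\quad + \big[1-\delta({z_p}^k)\delta({z_p}^{m-1})\big](t^2-t)x^{n+p}\big({z_p}^k \s {z_p}^{m-1}\big).
\end{align*}
I would then match the four terms on the right with pieces of the sum $\sum_{l=0}^{m}$ in \eqref{equn9}. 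The first term is exactly the $l=0$ summand (again using ${z_p}^{-1}=0$). Substituting the inductive hypothesis into $z_n{z_p}^k \s {z_p}^{m-1}$ and reindexing $l\mapsto l+1$ turns the second term into the sum over $1\leq l\leq m$ of $\big\{{z_p}^lz_n+(1-2t){z_p}^{l-1}z_{n+p}+(1-\delta_{k,0}\delta_{m-1,l-1})(t^2-t){z_p}^{l-1}x^{n+p}\big\}({z_p}^k \s {z_p}^{m-l})$ — except that for $l=1$ only the ${z_p}^1z_n$ piece survives, since the index shift multiplies a factor ${z_p}^{-1}=0$ in the other two. The missing $(1-2t){z_p}^0z_{n+p}$ and $(t^2-t){z_p}^0x^{n+p}$ pieces of the $l=1$ summand are supplied precisely by the third and fourth terms above, once one observes that $\delta({z_p}^{m-1})=\delta_{m-1,0}=\delta_{m,1}$ and $\delta({z_p}^k)=\delta_{k,0}$, so that $1-\delta({z_p}^k)\delta({z_p}^{m-1})=1-\delta_{k,0}\delta_{m,1}$. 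Using also $\delta_{m-1,l-1}=\delta_{m,l}$ for $l\geq 2$, all contributions assemble into $\sum_{l=0}^{m}\big\{\cdots\big\}({z_p}^k \s {z_p}^{m-l})$, which is \eqref{equn9}.

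The only delicate point is this boundary bookkeeping at $l=1$: one must use the convention ${z_p}^{-1}=0$ to suppress the spurious contributions produced by the index shift, and invoke the elementary identities $\delta({z_p}^j)=\delta_{j,0}$ and $\delta_{m-1,l-1}=\delta_{m,l}$ so that the $\delta(w_1)\delta(w_2)$ factor coming from rule (ii) lines up with the Kronecker factor $\delta_{k,0}\delta_{m,l}$ in \eqref{equn9}; everything else is a routine reindexing. As an alternative, one could argue directly from the combinatorial description in Proposition \ref{them1}, classifying the monomials of $z_n{z_p}^k \s {z_p}^m$ by the position, relative to the letter originating from $z_n$, at which the merging or passing of the leading $z_p$'s from the right factor takes place; but the induction on $m$ via rule (ii) is the shorter route and is the one I would write up.
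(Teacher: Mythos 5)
Your proof is correct and takes essentially the same route as the paper: induction on $m$ using rule (ii), substituting the inductive hypothesis into $z_n{z_p}^k \s {z_p}^{m-1}$, reindexing $l\mapsto l+1$, and absorbing the $(1-2t)z_{n+p}$ and $(t^2-t)x^{n+p}$ terms into the $l=1$ summand via the convention ${z_p}^{-1}=0$ and the identities $\delta({z_p}^{j})=\delta_{j,0}$, $\delta_{m-1,l-1}=\delta_{m,l}$. Your explicit treatment of the base case $m=0$ is a small addition the paper omits.
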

	
	\begin{proof}
		First we fix the integers $n, k$ and $p$. Then we use mathematical induction on $m$.
		Assume that \eqref{equn9} is true for all integers $s$ with $s<m$.
		By the definition of t-stuffle product,	we have 
		\begin{align}
			\label{equn10}
			& z_n{z_p}^k \s {z_p}^m \nonumber \\
			&=z_n({z_p}^k \s {z_p}^m) + z_p(z_n{z_p}^k \s {z_p}^{m-1})+\{(1-2t)z_{n+p}+ (1-\delta_{k,0}\delta_{m,1})(t^2-t)x^{n+p}\}({z_p}^k \s {z_p}^{m-1}).
		\end{align}
		Using the induction hypothesis,
\begin{align*}
			& z_n{z_p}^k \s {z_p}^m \nonumber \\
			&=z_n({z_p}^k \s {z_p}^m) +\{(1-2t)z_{n+p}+ (1-\delta_{k,0}\delta_{m,1})(t^2-t)x^{n+p}\}({z_p}^k \s {z_p}^{m-1}) \nonumber \\
			&\ \ \ +z_p	\sum_{l=0}^{m-1} \big\{{z_p}^lz_n+(1-2t){z_p}^{l-1}z_{n+p}+(1-\delta_{k,0}\delta_{m,l+1})(t^2-t){z_p}^{l-1}x^{n+p}\big\} ({z_p}^k \s {z_p}^{m-l-1}) \nonumber \\
			& = z_n({z_p}^k \s {z_p}^m) +\{(1-2t)z_{n+p}+ (1-\delta_{k,0}\delta_{m,1})(t^2-t)x^{n+p}\}({z_p}^k \s {z_p}^{m-1})+ z_pz_n({z_p}^k \s {z_p}^{m-1}) \nonumber \\
			&\ \ \ + \sum_{l=1}^{m-1} \big\{{z_p}^{l+1}z_n+(1-2t){z_p}^{l}z_{n+p}+(1-\delta_{k,0}\delta_{m,l+1})(t^2-t){z_p}^{l}x^{n+p}\big\} ({z_p}^k \s {z_p}^{m-l-1}) \nonumber \\
			&= z_n({z_p}^k \s {z_p}^m) +\{z_pz_n+(1-2t)z_{n+p}+ (1-\delta_{k,0}\delta_{m,1})(t^2-t)x^{n+p}\}({z_p}^k \s {z_p}^{m-1}) \nonumber \\
			&\ \ \ + \sum_{l=2}^{m} \big\{{z_p}^{l}z_n+(1-2t){z_p}^{l-1}z_{n+p}+(1-\delta_{k,0}\delta_{m,l})(t^2-t){z_p}^{l-1}x^{n+p}\big\} ({z_p}^k \s {z_p}^{m-l}) \nonumber \\
			&=\sum_{l=0}^{m} \big\{{z_p}^{l}z_n+(1-2t){z_p}^{l-1}z_{n+p}+(1-\delta_{k,0}\delta_{m,l})(t^2-t){z_p}^{l-1}x^{n+p}\big\} ({z_p}^k \s {z_p}^{m-l}).
		\end{align*}
		This completes the proof.
	\end{proof}
 
 Proof of Theorem $1$:
		By the definition of t-stuffle product, we have
		\begin{align}
         \label{twenty}
			&z_m{z_p}^n \s z_u{z_p}^v \nonumber \\
			&=z_m({z_p}^n \s z_u{z_p}^v) + z_u(z_m{z_p}^n \s {z_p}^{v})+\{(1-2t)z_{m+u}+ (1-\delta_{n,0}\delta_{v,0})(t^2-t)x^{m+u}\}({z_p}^n \s {z_p}^{v}).
		\end{align}
Using Lemma \ref{lem2} and Lemma \ref{lem3} in \eqref{twenty} , we can easily obtain \eqref{equn11}.	
\section{Recursive formula for t-stuffle product}
To derive the recursive formula for t-stuffle product, we first define the product $\s_o$ : $\H \times \H$ $\rightarrow$ $\h_t$ which is $\mathbb{Q}[t]$-bilinear, and satisfies the rules
	\begin{align*}
		&(i)~ 1\s_o w = w\s_o1=w,\\
		&(ii)~z_kw_1\s_o z_lw_2 = z_k(w_1\s_o z_lw_2) + z_l(z_kw_1\s_o w_2)   
		+\{(1-2t)z_{k+l}+ (t^2-t)x^{k+l}\}(w_1\s_o w_2) \\
		& \ \ \ \ \text{where}~ w, w_1, w_2 \in{A^*}\cap{\H} ~\text{and}~ k,l \in{\N}.
	\end{align*}
This product $\s_o$ is commutative.\\

The following theorem gives the recursive formula for t-stuffle product.
	
	\begin{proposition}
		\label{thm302}
		Let $ m, n, k_1, \dots , k_m, l_1, \dots , l_n$ be positive integers. Then for any integer j with $1 \leq j \leq m$, we have
		\begin{align}
			\label{eq5}
			& z_{k_1}z_{k_2} \dots z_{k_m} \s 	z_{l_1}z_{l_2} \dots z_{l_n} \nonumber \\
			&= \sum_{i=0}^{n} \bigg[(z_{k_1}z_{k_2} \dots z_{k_{j-1}} \s_o 	z_{l_1}z_{l_2} \dots z_{l_i})z_{k_j}+ (z_{k_1}z_{k_2} \dots z_{k_{j-1}} \s_o 	z_{l_1}z_{l_2} \dots z_{l_{i-1}})\nonumber \\
			& \ \ \ \ \times \{(1-2t)z_{k_j+l_i}+(1-\delta_{i,n}\delta_{j,m})(t^2-t)x^{k_j+l_i}\}\bigg] (z_{k_{j+1}} \dots z_{k_m} \s 	z_{l_{i+1}} \dots z_{l_n}),
		\end{align}
where $ z_{k_i} \dots z_{k_{i-1}}=1$ and  $ z_{k_i} \dots z_{k_{i-2}}=0$ for any integer $i$, $\delta_{ij}$ is Kronecker's delta symbol.	 
		
	\end{proposition}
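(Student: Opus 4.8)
The plan is to prove \eqref{eq5} by induction on $m+n$, keeping the position $j$ as a free parameter throughout. The base cases are immediate: if $n=0$ the right-hand factor is empty and both sides reduce to $z_{k_1}\cdots z_{k_m}$, and for $m=n=1$ one expands the defining recursion of $\s$ directly (the $x^{k_1+l_1}$-term there disappears since $\delta(1)\delta(1)=1$) and checks \eqref{eq5} term by term, using $1\s_o w=w$ and $1\s w=w$. The shape of the identity mirrors the combinatorial description \eqref{equain1}: in a term of $z_{k_1}\cdots z_{k_m}\s z_{l_1}\cdots z_{l_n}$, order-preservation forces $z_{k_1},\dots,z_{k_{j-1}}$ to be interleaved with, and possibly merged into, exactly some prefix $z_{l_1},\dots,z_{l_i}$ of the right word, with the $(t^2-t)x^{\bullet}$-absorption always available because none of them is in final position --- this interleaving is precisely $z_{k_1}\cdots z_{k_{j-1}}\s_o z_{l_1}\cdots z_{l_i}$, which explains why $\s_o$, whose $x$-term carries no Kronecker factor, is what appears; then $z_{k_j}$ is isolated or merged with $z_{l_i}$, and the two suffixes form an honest $\s$-product. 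This picture can be made into a direct proof, but the induction is shorter to write.

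For the inductive step, expand by the defining recursion of $\s$, peeling off $z_{k_1}$ and $z_{l_1}$, into the three pieces (a) $z_{k_1}(z_{k_2}\cdots z_{k_m}\s z_{l_1}\cdots z_{l_n})$, (b) $z_{l_1}(z_{k_1}\cdots z_{k_m}\s z_{l_2}\cdots z_{l_n})$, and (c) $\{(1-2t)z_{k_1+l_1}+[1-\delta(z_{k_2}\cdots z_{k_m})\delta(z_{l_2}\cdots z_{l_n})](t^2-t)x^{k_1+l_1}\}(z_{k_2}\cdots z_{k_m}\s z_{l_2}\cdots z_{l_n})$, all of weight smaller than $m+n$. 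If $j=1$ then $z_{k_1}\cdots z_{k_{j-1}}=1$: piece (a) is the $i=0$ summand of \eqref{eq5}, piece (c) is the $i=1$ summand apart from its $z_{l_1}z_{k_1}(\cdots)$ part (here $1-\delta(z_{k_2}\cdots z_{k_m})\delta(z_{l_2}\cdots z_{l_n})=1-\delta_{1,m}\delta_{1,n}$), and applying the inductive hypothesis to $z_{k_1}\cdots z_{k_m}\s z_{l_2}\cdots z_{l_n}$ at position $1$ and prepending $z_{l_1}$ yields that missing part together with all the summands $i\ge 2$; summing gives \eqref{eq5}.

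If $2\le j\le m$ then $m\ge 2$, hence $\delta(z_{k_2}\cdots z_{k_m})=0$ and the Kronecker factor in (c) is $1$. Apply the inductive hypothesis to $z_{k_2}\cdots z_{k_m}\s z_{l_1}\cdots z_{l_n}$ at position $j-1$ (its $(j-1)$-st letter is $z_{k_j}$), to $z_{k_1}\cdots z_{k_m}\s z_{l_2}\cdots z_{l_n}$ at position $j$, and to $z_{k_2}\cdots z_{k_m}\s z_{l_2}\cdots z_{l_n}$ at position $j-1$. Using $\delta_{j-1,m-1}=\delta_{j,m}$, $\delta_{i-1,n-1}=\delta_{i,n}$ and the reindexing $i\mapsto i-1$ on prefixes of $z_{l_2}\cdots z_{l_n}$, one finds that for each fixed $i$ the common right-hand factor is $z_{k_{j+1}}\cdots z_{k_m}\s z_{l_{i+1}}\cdots z_{l_n}$, multiplied by $z_{k_j}$ in the ``$z_{k_j}$ isolated'' terms and by $\{(1-2t)z_{k_j+l_i}+(1-\delta_{i,n}\delta_{j,m})(t^2-t)x^{k_j+l_i}\}$ in the merged terms, while the remaining left factor, collected from (a), (b), (c), is
\begin{align*}
&z_{k_1}\bigl(z_{k_2}\cdots z_{k_{j-1}}\s_o z_{l_1}\cdots z_{l_i}\bigr)
+z_{l_1}\bigl(z_{k_1}\cdots z_{k_{j-1}}\s_o z_{l_2}\cdots z_{l_i}\bigr)\\
&\qquad+\bigl\{(1-2t)z_{k_1+l_1}+(t^2-t)x^{k_1+l_1}\bigr\}\bigl(z_{k_2}\cdots z_{k_{j-1}}\s_o z_{l_2}\cdots z_{l_i}\bigr),
\end{align*}
which by the defining recursion of $\s_o$ equals $z_{k_1}\cdots z_{k_{j-1}}\s_o z_{l_1}\cdots z_{l_i}$. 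Summed over $i$, this is exactly the right-hand side of \eqref{eq5}, completing the induction.

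The step I expect to be the main obstacle is this last recombination. One must keep the degenerate conventions $z_{k_i}\cdots z_{k_{i-1}}=1$ and $z_{k_i}\cdots z_{k_{i-2}}=0$ consistent across the three applications of the hypothesis and their reindexings, and check that the Kronecker symbols transform precisely as $\delta_{j-1,m-1}=\delta_{j,m}$ and $\delta_{i-1,n-1}=\delta_{i,n}$, so that the coefficient of $x^{k_j+l_i}$ comes out as $1-\delta_{i,n}\delta_{j,m}$ --- in particular vanishing exactly when $z_{k_j}$ and $z_{l_i}$ are both in final position. It is also worth recording why the bracketed products involve $\s_o$ and not $\s$: each such product is immediately followed by the nonempty word $z_{k_j}\cdots$, so at those inner stages the factor $1-\delta(w_1)\delta(w_2)$ in the recursion for $\s$ is never $0$, i.e.\ $\s$ there coincides with $\s_o$; emptiness can occur only at the outermost level, which is the unique source of the correction $1-\delta_{i,n}\delta_{j,m}$.
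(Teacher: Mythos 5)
Your induction on $m+n$ is correct and coincides, step for step, with the paper's own alternative proof of this proposition: the same peeling off of $z_{k_1}$ and $z_{l_1}$, the separate treatment of $j=1$, the three applications of the inductive hypothesis at positions $j-1$, $j$, $j-1$, and the final recombination of the left factors via the defining recursion of $\s_o$, with the same bookkeeping $\delta_{j-1,m-1}=\delta_{j,m}$, $\delta_{i-1,n-1}=\delta_{i,n}$. The paper's primary proof is instead the direct combinatorial argument sorting the surjections $\sigma$ into those with $\sigma(i)<\sigma(j)<\sigma(i+1)$ and those with $\sigma(j)=\sigma(i)$ --- essentially the picture you sketch in your first paragraph but do not pursue.
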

	
\begin{proof} By the combinatorial description of t-stuffle product,  The left-hand side of \eqref{eq5} is given by the sum
		\begin{align*}
			\sum_{\substack{0 \leq k \leq min\{m,n\};\\ i+j=k,i,j \in \mathbb{Z}_{\geq 0}}} {(1-2t)}^j {(t^2-t)}^i \sum_{\sigma \in {\mathbb{P}}_{n,m,k}} z_{{\sigma}^{-1}(1)} z_{{\sigma}^{-1}(2)} \dots z_{{\sigma}^{-1}(n+m-k)},
		\end{align*}
		where ${\mathbb{P}}_{n,m,k,j}$ is the set of all surjections 
		$$\sigma : \{1,2, \dots , m+n\} \rightarrow \{1,2, \dots , m+n-k\}$$
		with conditions
		\begin{align*}
			&(i)~ \sigma (1) < \sigma (2) < \dots < \sigma (m); \sigma (m+1) < \sigma (m+2) < \dots  < \sigma (m+n)\\
			&(ii)~ \text{For any } \sigma \in {\mathbb{P}}_{n,m,k}~ \text{and for} ~r \in \{1,2 , \dots ,n+m-k\},\\
			& \hspace*{3cm}(a)~ z_{{\sigma}^{-1}(r)}=z_s ~\text{if} ~{\sigma}^{-1}(r)=\{s\}\\
			& \hspace*{3cm}(b)~ z_{{\sigma}^{-1}(r)} \in \{z_{s_1+s_2}, (1-\delta_{s_1,n}\delta_{s_2,m})x^{s_1+s_2}\} ~\text{if}~ {\sigma}^{-1}(r)=\{s_1, s_2\}\\
			&(iii)~ \text{In each term, number of}~ z_{{\sigma}^{-1}(r)}, \text{which is equal to }~ z_{s_1+s_2}  ~\text{is given by the exponent of}~ (1-2t)\\
			& \ \ \ \ \ \  ~ \text{and which is equal to }~ (1-\delta_{s_1,n}\delta_{s_2,m})x^{s_1+s_2} ~\text{is given by the exponent of}~ (t^2-t).
		\end{align*}
		
		As in \cite{3}, we can see that for a fixed $j \in \{1,2, \dots , m\}$, there are two different types of $\sigma^{,}$s one satisfies the condition $ \sigma(i) < \sigma(j) <\sigma(i+1)$ for some $i \in \{m, m+1, \dots , m+n\}$, and the other one satisfies the condition $\sigma(j)=\sigma(i)$ for some $i \in \{m+1, m+2, \dots , m+n\}$. Summing over the first type of $\sigma^{,}$s we get the term 
		
		$$ \sum_{i=0}^{n} (z_{k_1}z_{k_2} \dots z_{k_{j-1}} \s_o 	z_{l_1}z_{l_2} \dots z_{l_i})z_{k_j}(z_{k_{j+1}} \dots z_{k_m} \s 	z_{l_{i+1}} \dots z_{l_n}).$$
Summing over second type of $\sigma^,$s we obtain the term 
		$$\sum_{i=0}^{n} (z_{k_1}z_{k_2} \dots z_{k_{j-1}} \s_o 	z_{l_1}z_{l_2} \dots z_{l_{i-1}})
		\{(1-2t)z_{k_j+l_i}+(1-\delta_{i,n}\delta_{j,m})(t^2-t)x^{k_j+l_i}\} (z_{k_{j+1}} \dots z_{k_m} \s 	z_{l_{i+1}} \dots z_{l_n}).$$
Thus we get the recursive formula.
		\end{proof}
\textbf{Alternative proof:} Here we use  induction on $m+n$. Suppose \eqref{eq5} holds for all $u, v$ with $u+v < m+n$. By the definition of t-stuffle product, 
	\begin{align}
		\label{7}
		&z_{k_1}z_{k_2} \dots z_{k_m} \s 	z_{l_1}z_{l_2} \dots z_{l_n} \nonumber \\
		&=z_{k_1}(z_{k_2} \dots z_{k_m} \s 	z_{l_1}z_{l_2} \dots z_{l_n})+z_{l_1}(z_{k_1}z_{k_2} \dots z_{k_m} \s 	z_{l_2} \dots z_{l_n}) \nonumber \\
		& \ \ +\{(1-2t)z_{k_1+l_1}+(1-\delta_{m,1}\delta_{n,1})(t^2-t)x^{k_1+l_1}\}(z_{k_2} \dots z_{k_m} \s 	z_{l_2} \dots z_{l_n}).
	\end{align}
	Total number of $z_k$'s in each of the t-stuffle products  $(z_{k_2} \dots z_{k_m} \s 	z_{l_1}z_{l_2} \dots z_{l_n}), (z_{k_1}z_{k_2} \dots z_{k_m} \s 	z_{l_2} \dots z_{l_n})$ and $ (z_{k_2} \dots z_{k_m} \s 	z_{l_2} \dots z_{l_n})$ are less than $m+n$. So by using inductive hypothesis in the second term on R.H.S of \eqref{7}, we get
	\begin{align*}
		&z_{k_1}z_{k_2} \dots z_{k_m} \s 	z_{l_1}z_{l_2} \dots z_{l_n}\\
		&=z_{k_1}(z_{k_2} \dots z_{k_m} \s 	z_{l_1}z_{l_2} \dots z_{l_n})+z_{l_1}\bigg[\sum_{i=1}^{n} \bigg[z_{l_2} \dots z_{l_i}z_{k_1}+ (z_{l_2} \dots z_{l_{i-1}})\nonumber \\
		& \ \ \ \ \times \{(1-2t)z_{k_1+l_i}+(1-\delta_{i,n}\delta_{1,m})(t^2-t)x^{k_1+l_i}\}\bigg] (z_{k_{2}} \dots z_{k_m} \s 	z_{l_{i+1}} \dots z_{l_n})\bigg]\\
		& \ \ +\{(1-2t)z_{k_1+l_1}+(1-\delta_{m,1}\delta_{n,1})(t^2-t)x^{k_1+l_1}\}(z_{k_2} \dots z_{k_m} \s 	z_{l_2} \dots z_{l_n}) \\
		&=z_{k_1}(z_{k_2} \dots z_{k_m} \s 	z_{l_1}z_{l_2} \dots z_{l_n})+ z_{l_1}z_{k_1}(z_{k_{2}} \dots z_{k_m} \s 	z_{l_{2}} \dots z_{l_n})\\
		& \ \ + \{(1-2t)z_{k_1+l_1}+(1-\delta_{m,1}\delta_{n,1})(t^2-t)x^{k_1+l_1}\}(z_{k_2} \dots z_{k_m} \s 	z_{l_2} \dots z_{l_n}) \\
		& \ \ + z_{l_1}\bigg[\sum_{i=2}^{n} \bigg[z_{l_2} \dots z_{l_i}z_{k_1}+ z_{l_2} \dots z_{l_{i-1}} \times \{(1-2t)z_{k_1+l_i}+(1-\delta_{i,n}\delta_{1,m})(t^2-t)x^{k_1+l_i}\}\bigg]\nonumber \\
		& \ \ \ \hspace{2em} \times (z_{k_{2}} \dots z_{k_m} \s 	z_{l_{i+1}} \dots z_{l_n})\bigg] \\
		& =z_{k_1}(z_{k_2} \dots z_{k_m} \s 	z_{l_1}z_{l_2} \dots z_{l_n})+ \big[z_{l_1}z_{k_1}+\{(1-2t)z_{k_1+l_1}+(1-\delta_{m,1}\delta_{n,1})(t^2-t)x^{k_1+l_1}\}\big] \\
		& \ \ \times (z_{k_{2}} \dots z_{k_m} \s 	z_{l_{2}} \dots z_{l_n})+ \bigg[\sum_{i=2}^{n} \big\{z_{l_1}z_{l_2} \dots z_{l_i}z_{k_1}+ z_{l_1}z_{l_2} \dots z_{l_{i-1}} \times \\
		& \ \  \{(1-2t)z_{k_1+l_i}+(1-\delta_{i,n}\delta_{1,m})(t^2-t)x^{k_1+l_i}\}\big\}
		\times (z_{k_{2}} \dots z_{k_m} \s 	z_{l_{i+1}} \dots z_{l_n})\bigg]\\ 
		&=\sum_{i=0}^{n} \big\{z_{l_1}z_{l_2} \dots z_{l_i}z_{k_1}+ z_{l_1}z_{l_2} \dots z_{l_{i-1}} \times 
		\{(1-2t)z_{k_1+l_i}+(1-\delta_{i,n}\delta_{1,m})(t^2-t)x^{k_1+l_i}\}\big\}\\
		& \ \hspace{2em} \times (z_{k_{2}} \dots z_{k_m} \s 	z_{l_{i+1}} \dots z_{l_n}).
	\end{align*}
Therefore \eqref{eq5} is true for $j=1$. Let $2 \leq j \leq m$. Now, applying inductive hypothesis in all the three terms on R.H.S of \eqref{7}, we get
	
	\begin{align*}
		&z_{k_1}z_{k_2} \dots z_{k_m} \s 	z_{l_1}z_{l_2} \dots z_{l_n}\\
		&=z_{k_1}\bigg[\sum_{i=0}^{n} \Big\{(z_{k_2} \dots z_{k_{j-1}} \s_o 	z_{l_1}z_{l_2} \dots z_{l_i})z_{k_j}+ (z_{k_2} \dots z_{k_{j-1}} \s_o 	z_{l_1}z_{l_2} \dots z_{l_{i-1}})\nonumber \\
		& \ \ \ \ \times \{(1-2t)z_{k_j+l_i}+(1-\delta_{i,n}\delta_{j,m})(t^2-t)x^{k_j+l_i}\}\Big\} (z_{k_{j+1}} \dots z_{k_m} \s 	z_{l_{i+1}} \dots z_{l_n})\bigg]\\
		&\ \ + z_{l_1}\bigg[\sum_{i=1}^{n} \Big\{(z_{k_1}z_{k_2} \dots z_{k_{j-1}} \s_o 	z_{l_2} \dots z_{l_i})z_{k_j}+ (z_{k_1}z_{k_2} \dots z_{k_{j-1}} \s_o z_{l_2} \dots z_{l_{i-1}})\nonumber \\
		& \ \ \ \ \times \{(1-2t)z_{k_j+l_i}+(1-\delta_{i,n}\delta_{j,m})(t^2-t)x^{k_j+l_i}\}\Big\} (z_{k_{j+1}} \dots z_{k_m} \s 	z_{l_{i+1}} \dots z_{l_n})\bigg]\\
	\end{align*}
\begin{align*}
		& \ \ +\{(1-2t)z_{k_1+l_1}+(1-\delta_{m,1}\delta_{n,1})(t^2-t)x^{k_1+l_1}\}\bigg[\sum_{i=1}^{n} \Big\{(z_{k_2} \dots z_{k_{j-1}} \s_o 	z_{l_2} \dots z_{l_i})z_{k_j} \nonumber \\
		& \ \ \ \ \ \ +(z_{k_2} \dots z_{k_{j-1}} \s_o 	z_{l_2} \dots z_{l_{i-1}}) \times \{(1-2t)z_{k_j+l_i}+(1-\delta_{i,n}\delta_{j,m})(t^2-t)x^{k_j+l_i}\}\Big\} \\
		& \hspace{3em} \times (z_{k_{j+1}} \dots z_{k_m} \s 	z_{l_{i+1}} \dots z_{l_n})\bigg] \\
		&= z_{k_1}z_{k_2} \dots z_{k_{j-1}}z_{k_j}(z_{k_{j+1}} \dots z_{k_m} \s 	z_{l_{1}} \dots z_{l_n})+\sum_{i=1}^{n}\bigg[z_{k_1}\Big\{(z_{k_2} \dots z_{k_{j-1}} \s_o 	z_{l_1} \dots z_{l_i})z_{k_j} \nonumber \\
		& \ \ \ \ \ \  +(z_{k_2} \dots z_{k_{j-1}} \s_o 	z_{l_1}z_{l_2} \dots z_{l_{i-1}}) \times \{(1-2t)z_{k_j+l_i}+(1-\delta_{i,n}\delta_{j,m})(t^2-t)x^{k_j+l_i}\}\Big\}\\
		& \ \ \ \ + z_{l_1}\Big\{(z_{k_1}z_{k_2} \dots z_{k_{j-1}} \s_o 	z_{l_2} \dots z_{l_i})z_{k_j} + (z_{k_1}z_{k_2} \dots z_{k_{j-1}} \s_o z_{l_2} \dots z_{l_{i-1}})\nonumber \\
		& \ \ \ \ \ \  \times \{(1-2t)z_{k_j+l_i}+(1-\delta_{i,n}\delta_{j,m})(t^2-t)x^{k_j+l_i}\}\Big\} \\
		& \ \ \ \ + \{(1-2t)z_{k_1+l_1}+(1-\delta_{m,1}\delta_{n,1})(t^2-t)x^{k_1+l_1}\} \Big\{(z_{k_2} \dots z_{k_{j-1}} \s_o 	z_{l_2} \dots z_{l_i})z_{k_j} \nonumber \\
		& \ \ \ \ \ \ +(z_{k_2} \dots z_{k_{j-1}} \s_o 	z_{l_2} \dots z_{l_{i-1}}) \times \{(1-2t)z_{k_j+l_i}+(1-\delta_{i,n}\delta_{j,m})(t^2-t)x^{k_j+l_i}\}\Big\} \bigg] \\
		& \hspace{3em} \times (z_{k_{j+1}} \dots z_{k_m} \s 	z_{l_{i+1}} \dots z_{l_n})\\
		&=z_{k_1}z_{k_2} \dots z_{k_{j-1}}z_{k_j}(z_{k_{j+1}} \dots z_{k_m} \s 	z_{l_{1}} \dots z_{l_n})\\
		& \ \ +\sum_{i=1}^{n}\bigg[ (z_{k_1} \dots z_{k_{j-1}} \s_o 	z_{l_1}z_{l_2} \dots z_{l_{i}})z_{k_j}+(z_{k_1} \dots z_{k_{j-1}} \s_o 	z_{l_1} z_{l_2}\dots z_{l_{i-1}})\\
		& \ \ \ \times \{(1-2t)z_{k_j+l_i}+(1-\delta_{i,n}\delta_{j,m})(t^2-t)x^{k_j+l_i}\} \bigg] \times (z_{k_{j+1}} \dots z_{k_m} \s 	z_{l_{i+1}} \dots z_{l_n})\\
		&=\sum_{i=0}^{n} \bigg[(z_{k_1}z_{k_2} \dots z_{k_{j-1}} \s_o 	z_{l_1}z_{l_2} \dots z_{l_i})z_{k_j}+ (z_{k_1}z_{k_2} \dots z_{k_{j-1}} \s_o 	z_{l_1}z_{l_2} \dots z_{l_{i-1}})\nonumber \\
		& \ \ \ \ \times \{(1-2t)z_{k_j+l_i}+(1-\delta_{i,n}\delta_{j,m})(t^2-t)x^{k_j+l_i}\}\bigg] (z_{k_{j+1}} \dots z_{k_m} \s 	z_{l_{i+1}} \dots z_{l_n}).
	\end{align*}
Hence the proof.\\
	
Proof of Theorem $2$: By the definition of t-stuffle product, we have
		\begin{align}
			\label{eq7}
			&z_m{z_p}^n \s z_u{z_p}^v \nonumber \\
			&=z_m({z_p}^n \s z_u{z_p}^v)+z_u(z_m{z_p}^n \s {z_p}^v)+\{(1-2t)z_{m+u}+(1-\delta_{n,0}\delta_{v,0})(t^2-t)x^{m+u}\}({z_p}^n \s {z_p}^v).
		\end{align}
If we apply the recursive formula \eqref{eq5} with $j=1$, we get
		\begin{align}
			\label{eq8}
			z_u{z_p}^v \s	{z_p}^n = \sum_{i=0}^{n}[{z_p}^iz_u+{z_p}^{i-1}\{(1-2t)z_{u+p}+(1-\delta_{v,0}\delta_{n,i})(t^2-t)x^{u+p}\}]({z_p}^{v} \s {z_p}^{n-i})
		\end{align}
and 
		\begin{align}
			\label{eq9}
			z_m{z_p}^n \s {z_p}^v=\sum_{i=0}^{v}[{z_p}^iz_m+{z_p}^{i-1}\{(1-2t)z_{m+p}+(1-\delta_{n,0}\delta_{v,i})(t^2-t)x^{m+p}\}]({z_p}^{n} \s {z_p}^{v-i}).
		\end{align}
Using \eqref{eq8} and \eqref{eq9} in \eqref{eq7} we can easily obtain \eqref{eq6}. Further, applying  Lemma \ref{lem2} in \eqref{eq6}, we can prove Theorem $1$. \\

\textbf{Some observations:}
Using \eqref{eq5}, we can obtain general t-stuffle product
\begin{align}
	\label{e280}
	z_{k_1}{z_p}^{r_1}z_{k_2}{z_p}^{r_2} \dots z_{k_m}{z_p}^{r_m} \s z_{l_1}{z_p}^{s_1}z_{l_2}{z_p}^{s_2} \dots z_{l_n}{z_p}^{s_n},
\end{align}
where $k_i,l_j,p$ are positive integers and $r_i, s_j$ are nonnegative integers, for $i=1, 2, \dots , m; j=1, 2, \dots , n$. If we use the formula \eqref{eq5} for $j=1$ to the above product then we can see that this product can be expressed by products
\begin{align*}
	& (U_1){z_p}^{r_1}z_{k_2}{z_p}^{r_2} \dots z_{k_m}{z_p}^{r_m} \s z_{l_1}{z_p}^{s_1}z_{l_2}{z_p}^{s_2} \dots z_{l_n}{z_p}^{s_n},\\
	& (U_2) {z_p}^{r_1}z_{k_2}{z_p}^{r_2} \dots z_{k_m}{z_p}^{r_m} \s {z_p}^{i_1}z_{l_2}{z_p}^{s_2} \dots z_{l_n}{z_p}^{s_n}, \hspace{5em} 0 \leq i_1 \leq s_1,\\
	& (U_3) {z_p}^{r_1}z_{k_2}{z_p}^{r_2} \dots z_{k_m}{z_p}^{r_m} \s {z_p}^{i_2}z_{l_3}{z_p}^{s_3} \dots z_{l_n}{z_p}^{s_n}, \hspace{5em} 0 \leq i_2 \leq s_2,\\
	& \ \ \hspace{4.5cm}  \vdots\\
	& (U_n) {z_p}^{r_1}z_{k_2}{z_p}^{r_2} \dots z_{k_m}{z_p}^{r_m} \s {z_p}^{i_{n-1}}z_{l_n}{z_p}^{s_n} , \hspace{8.5em} 0 \leq i_{n-1} \leq s_{n-1}\\
	& (U_{n+1}) {z_p}^{r_1}z_{k_2}{z_p}^{r_2} \dots z_{k_m}{z_p}^{r_m} \s {z_p}^{i_n} , \hspace{11.3em} 0 \leq i_n \leq s_{n}
\end{align*}

Let $m >1$. For $i=1, 2 , \dots  , n$, if we use the formula  \eqref{eq5} with $j=m_1+1$ to the product $(U_i)$, then we can see that $(U_i)$ can be expressed by products $(U_{i+1}), \dots , (U_{n+1})$. Also if we apply the formula \eqref{eq5} to the product $(U_{n+1})$ with $j=r_1+1$, we get
\begin{align*}
	&{z_p}^{r_1}z_{k_2}{z_p}^{r_2} \dots z_{k_m}{z_p}^{r_m} \s {z_p}^{i_n}\\
	&= \sum_{i=0}^{i_n} \bigg[({z_p}^{r_1} \s_o {z_p}^{i})z_{k_2}+ ({z_p}^{r_1}\s_o 	{z_p}^{i-1})\nonumber \\
	& \ \ \ \ \times \{(1-2t)z_{k_2+p}+(t^2-t)x^{k_2+p}\}\bigg] ({z_p}^{r_2}z_{k_3} \dots z_{k_m}{z_p}^{r_m} \s 	{z_p}^{i_n-i}).
\end{align*}
Therefore $(U_{n+1})$ can be expressed by the product with smaller $m$ of the form $(U_{n+1})$. Hence the product \eqref{e280} can be expressed by the products ${z_p}^{i} \s_o {z_p}^{j}$ and ${z_p}^{i} \s{z_p}^{j}$.

For example, consider $m=1, n=2$. Then applying the formula  \eqref{eq5} with $j=1$ to the product $z_{k_1}{z_p}^{r_1}  \s z_{l_1}{z_p}^{s_1}z_{l_2}{z_p}^{s_2}$, we get
\begin{align}
	\label{299}
&z_{k_1}{z_p}^{r_1}  \s z_{l_1}{z_p}^{s_1}z_{l_2}{z_p}^{s_2} \nonumber \\
&=z_{k_1}({z_p}^{r_1}  \s z_{l_1}{z_p}^{s_1}z_{l_2}{z_p}^{s_2})+ \{(1-2t)z_{k_1+l_1}+(t^2-t)x^{k_1+l_1}\}({z_p}^{r_1}  \s {z_p}^{s_1}z_{l_2}{z_p}^{s_2})\nonumber\\
& \ \ + \sum_{i=0}^{s_1}\big\{z_{l_1}{z_p}^{i}z_{k_1}+(1-2t)z_{l_1}{z_p}^{i-1}z_{k_1+p}+(t^2-t)z_{l_1}{z_p}^{i-1}x^{k_1+p}\big\}({z_p}^{r_1} \s {z_p}^{s_1-i}z_{l_2}{z_p}^{s_2} )\nonumber \\
& \ \ \ +\big\{(1-2t)z_{l_1}{z_p}^{s_1}z_{k_1+l_2}+(1-\delta_{r_1,0}\delta_{s_2,0})(t^2-t)z_{l_1}{z_p}^{s_1}x^{k_1+l_2}\big\}
({z_p}^{r_1} \s {z_p}^{s_2})\nonumber\\
& \ \ \ + \sum_{i=0}^{s_2}\big\{z_{l_1}{z_p}^{s_1}z_{l_2}{z_p}^{i}z_{k_1} +(1-2t)z_{l_1}{z_p}^{s_1}z_{l_2}{z_p}^{i-1}z_{k_1+p}+(1-\delta_{r_1,0}\delta_{s_2,i})\nonumber \\
& \ \ \ \ \times (t^2-t)z_{l_1}{z_p}^{s_1}z_{l_2}{z_p}^{i-1}x^{k_1+p}\big\}({z_p}^{r_1} \s {z_p}^{s_2-i} ).
\end{align}
Therefore, we have to compute the products 
\begin{align*}
	&(i) {z_p}^{r}  \s z_{l_1}{z_p}^{s_1}z_{l_2}{z_p}^{s_2}\\
	&(ii) {z_p}^{r}  \s {z_p}^{s_1}z_{l}{z_p}^{s_2} 
\end{align*}
Applying the recursive formula \eqref{eq5} with $j=1$ to the product $(i)$, we get
\begin{align}
	\label{3012}
	&{z_p}^{r}  \s z_{l_1}{z_p}^{s_1}z_{l_2}{z_p}^{s_2} \nonumber\\
	&=\sum_{i=0}^{r}\big\{{z_p}^{i}z_{l_1}+(1-2t){z_p}^{i-1}z_{l_1+p}+(t^2-t){z_p}^{i-1}x^{l_1+p}\big\}({z_p}^{r-i} \s {z_p}^{s_1-i}z_{l_2}{z_p}^{s_2} ).
\end{align}
Thus the product $(i)$ can be expressed by the product $(ii)$. Now applying the recursive formula  \eqref{eq5} with $j=s_1+1$ to the product $(ii)$, we get
\begin{align}
	\label{311}
	&{z_p}^{r}  \s {z_p}^{s_1}z_{l}{z_p}^{s_2}\nonumber \\
	&= \sum_{i=0}^{r}\big\{({z_p}^{i} \s_0 {z_p}^{s_1})z_l  +({z_p}^{i-1} \s_0 {z_p}^{s_1})\{(1-2t)z_{l+p}+(1-\delta_{i,r}\delta_{s_2,0})(t^2-t)x^{l+p}\}\big\}({z_p}^{r-i} \s {z_p}^{s_2} ).
\end{align}
Using \eqref{3012} and \eqref{311} in \eqref{299} we get an expression for the product $z_{k_1}{z_p}^{r_1}  \s z_{l_1}{z_p}^{s_1}z_{l_2}{z_p}^{s_2}$ in terms of simple products ${z_p}^{i} \s{z_p}^{j}$ and ${z_p}^{i} \s_o {z_p}^{j}$.\\ 
\textbf{Note:} ${z_p}^{i} \s_o {z_p}^{j}$ can be obtained in a similar way of obtaining ${z_p}^{i} \s{z_p}^{j}$ in Lemma $1$.

\section{Some Applications}
Recall, for positive integers $d$ and $e$, 
\begin{align*}
	\mathbb{F}_d^e=\sum_{\substack{a_1 + \dots +a_{d}=ep;\\a_l=rp,r \in \mathbb{N}; |A|= j} } z_{a_1} \dots z_{a_{d}}.
\end{align*}
Using this notation \eqref{equn1} can be rewritten as 
\begin{align}
	\label{u32}
	&\hspace{1em}{z_p}^m \s {z_p}^n \nonumber \\
	&=\sum_{0 \leq k \leq {n}}\binom{m+n-2k}{m-k} \sum_{\substack{ i+j=k; i,j \in \mathbb{Z}_{\geq 0}} } {(t^2-t)}^i{(1-2t)}^j ~\mathbb{F}_{m+n-i-k}^{m+n}.	
\end{align} 

Now we prove the following result.

\begin{proposition}
	For positive integers $p\text{ and }k$, we have
	\begin{align}
		\label{27}
		&\sum_{\substack{m+n=k,\\m,n \geq 0}}{(-1)}^m z_p^m \s z_p^n \nonumber\\ & =
		\begin{cases}
			0& \text{if $k$ is odd}\\
			{(-1)}^\frac{k}{2}\sum_{\substack{l_1+l_2=\frac{k}{2};\\ l_1,l_2 \in \mathbb{Z}_{\geq 0}}}  {(t^2-t)}^{l_1}{(1-2t)}^{l_2} ~\mathbb{F}_{l_2}^k & \text{if $k$ is even} 
		\end{cases}
	\end{align}
\end{proposition}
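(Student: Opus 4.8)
The plan is to substitute the closed formula for ${z_p}^m \s {z_p}^n$ supplied by Lemma~\ref{lem2} (conveniently, the form \eqref{u32}) into the left-hand side of \eqref{27}, and then to collapse the resulting double sum by the binomial theorem. To keep the summation index of that formula from clashing with the integer $k$ in the statement, I will denote it by $\kappa$; thus
\begin{align*}
	{z_p}^m \s {z_p}^n = \sum_{0 \leq \kappa \leq \min\{m,n\}}\binom{m+n-2\kappa}{m-\kappa}\sum_{\substack{i+j=\kappa;\\ i,j \in \mathbb{Z}_{\geq 0}}}{(t^2-t)}^i{(1-2t)}^j~\mathbb{F}_{m+n-i-\kappa}^{m+n},
\end{align*}
where $\mathbb{F}_{m+n-i-\kappa}^{m+n}$ carries the condition $|A|=j$. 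Fixing $k$ and writing $n=k-m$, the superscript $m+n=k$ is constant over the sum in \eqref{27}, so $\mathbb{F}_{k-i-\kappa}^{k}$ (together with its constraint $|A|=j$) does not depend on $m$. Interchanging the order of summation and pulling everything but the binomial out of the $m$-sum gives
\begin{align*}
	\sum_{\substack{m+n=k,\\ m,n\geq 0}}{(-1)}^m z_p^m \s z_p^n = \sum_{\kappa \geq 0}\ \sum_{\substack{i+j=\kappa;\\ i,j \in \mathbb{Z}_{\geq 0}}}{(t^2-t)}^i{(1-2t)}^j~\mathbb{F}_{k-i-\kappa}^{k}\left(\sum_{m=\kappa}^{k-\kappa}{(-1)}^m\binom{k-2\kappa}{m-\kappa}\right),
\end{align*}
where the inner range $\kappa\leq m\leq k-\kappa$ is precisely the set of $m$ for which $\min\{m,k-m\}\geq\kappa$; in particular a value of $\kappa$ with $2\kappa>k$ contributes nothing.

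The key step is the evaluation of the inner alternating sum. Substituting $m=m'+\kappa$, so that $m'$ runs over the full range $0\leq m'\leq k-2\kappa$, the binomial theorem gives
\begin{align*}
	\sum_{m=\kappa}^{k-\kappa}{(-1)}^m\binom{k-2\kappa}{m-\kappa}={(-1)}^{\kappa}\sum_{m'=0}^{k-2\kappa}{(-1)}^{m'}\binom{k-2\kappa}{m'}={(-1)}^{\kappa}{(1-1)}^{k-2\kappa},
\end{align*}
which vanishes when $k-2\kappa>0$ and equals ${(-1)}^{\kappa}$ when $k=2\kappa$. Hence, if $k$ is odd there is no admissible $\kappa$ and the left-hand side of \eqref{27} is $0$; if $k$ is even, only $\kappa=k/2$ survives. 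Since $i+j=k/2$ forces $k-i-\kappa=j$, the surviving contribution is ${(-1)}^{k/2}\sum_{i+j=k/2}{(t^2-t)}^i{(1-2t)}^j\,\mathbb{F}_j^k$, and relabelling $(i,j)=(l_1,l_2)$ produces exactly the right-hand side of \eqref{27} (with $|A|=l_2$).

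I expect the only delicate point to be the bookkeeping: one must respect the constraint $\kappa\leq m\leq k-\kappa$ (equivalently, adopt the combinatorial convention $\binom{a}{b}=0$ outside $0\leq b\leq a$), so that the $\kappa$-sum may legitimately be extended to all $\kappa\geq 0$ and the $\mathbb{F}$-factor recognized as independent of $m$. Beyond that, the argument reduces to the classical vanishing of alternating binomial sums, so no substantive obstacle is anticipated.
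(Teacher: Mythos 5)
Your proof is correct, and it takes a genuinely cleaner route than the paper's, even though both rest on the same key input, namely the closed formula of Lemma~\ref{lem2} in the form \eqref{u32}. The paper splits into cases: for odd $k$ it pairs $z_p^d \s z_p^{k-d}$ with $z_p^{k-d}\s z_p^{d}$ and cancels by commutativity, while for even $k$ it folds the sum as $2\sum_{d=0}^{k/2-1}(-1)^d\, z_p^d\s z_p^{k-d}+(-1)^{k/2}z_p^{k/2}\s z_p^{k/2}$, substitutes \eqref{u32}, and then works through partial alternating binomial sums via $\sum_{d=0}^{N}(-1)^d\binom{M}{d}=(-1)^{N}\binom{M-1}{N}$ together with a relation between central binomial coefficients (which is in fact misprinted in the text; the correct identity is $2\binom{k-2l-1}{\frac{k}{2}-l-1}=\binom{k-2l}{\frac{k}{2}-l}$). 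You instead substitute the same formula into the unfolded sum, interchange the order of summation, and annihilate every stratum except $\kappa=k/2$ with the single identity $(1-1)^{k-2\kappa}=0$; this treats both parities uniformly, dispenses with the folding and with all partial-sum identities, and makes the survival of only the $\kappa=k/2$ term transparent. The one delicate point you flag --- that $\mathbb{F}_{k-i-\kappa}^{k}$, together with its implicit constraint $|A|=j$, depends only on $k=m+n$, $i$, $j$ and $\kappa$ and not on $m$ individually, so it may legitimately be pulled out of the $m$-sum whose range $\kappa\leq m\leq k-\kappa$ encodes $\kappa\leq\min\{m,n\}$ --- is indeed the only bookkeeping that needs care, and you handle it correctly; the final relabelling $k-i-\kappa=j$ at $\kappa=k/2$ matches the right-hand side of \eqref{27} exactly.
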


\begin{proof}
	For odd $k$, we have
	\begin{align*}
		&\sum_{\substack{m+n=k,\\m,n \geq 0}}{(-1)}^m z_p^m \s z_p^n \\
		&={z_p}^k-{z_p} \s {z_p}^{k-1}+ {z_p}^2 \s {z_p}^{k-2}- \dots +{(-1)}^{\frac{k-1}{2}}{z_p}^{\frac{k-1}{2}} \s {z_p}^{\frac{k+1}{2}} \\
		& \ \ \ + {(-1)}^{\frac{k+1}{2}}{z_p}^{\frac{k+1}{2}} \s {z_p}^{\frac{k-1}{2}}+ \dots -{z_p}^{k-2} \s {z_p}^{2}+{z_p}^{k-1} \s z_p-{z_p}^k\\
		&=0. 
	\end{align*}
 Suppose $k$ is even. Then
we have,
	\begin{align*}
		&\sum_{\substack{m+n=k,~ k:\text{even}\\m,n \geq 1}}{(-1)}^m z_p^m \s z_p^n \\
		&=2 \sum_{d=0}^{\frac{k}{2}-1} {(-1)}^d z_p^d \s z_p^{k-d}+ {(-1)}^\frac{k}{2} z_p^\frac{k}{2} \s z_p^{\frac{k}{2}} \\
		&=2 \sum_{d=0}^{\frac{k}{2}-1} {(-1)}^d \sum_{\substack{0 \leq l \leq d; \\l_1+l_2=l; l_1,l_2 \in \mathbb{Z}_{\geq 0}} } \binom{k-2l}{d-l}{(t^2-t)}^{l_1}{(1-2t)}^{l_2} ~\mathbb{F}_{k-l-l_1}^k\\
		& \ \ + {(-1)}^\frac{k}{2} \sum_{\substack{0 \leq l \leq \frac{k}{2}; \\l_1+l_2=l; l_1,l_2 \in \mathbb{Z}_{\geq 0}} } \binom{k-2l}{\frac{k}{2}-l}{(t^2-t)}^{l_1}{(1-2t)}^{l_2} ~\mathbb{F}_{k-l-l_1}^k \hspace{1cm} [using ~\eqref{u32}]\\
		&=\Bigg[-2\sum_{l=0}^{1}\binom{k-2l}{1-l}+2\sum_{l=0}^{2}\binom{k-2l}{2-l}+ \dots +2{(-1)}^{\frac{k}{2}-1}\sum_{l=0}^{\frac{k}{2}-1}\binom{k-2l}{\frac{k}{2}-1-l}\\
		& \ \ \ +{(-1)}^{\frac{k}{2}}\sum_{l=0}^{\frac{k}{2}-1}\binom{k-2l}{\frac{k}{2}-l}\Bigg] \times \sum_{\substack{l_1+l_2=l;\\ l_1,l_2 \in \mathbb{Z}_{\geq 0}} } {(t^2-t)}^{l_1}{(1-2t)}^{l_2} ~\mathbb{F}_{k-l-l_1}^k + 2{z_p}^k\\
		& \ \ \ + {(-1)}^\frac{k}{2}\sum_{\substack{l_1+l_2=\frac{k}{2};\\ l_1,l_2 \in \mathbb{Z}_{\geq 0}}}  {(t^2-t)}^{l_1}{(1-2t)}^{l_2} ~\mathbb{F}_{\frac{k}{2}-l_1}^k\\
		&=\bigg\{2\sum_{d=0}^{\frac{k}{2}-1} {(-1)}^d \binom{k}{d}+  {(-1)}^\frac{k}{2}\binom{k}{\frac{k}{2}}\bigg\} {z_p}^k\\
		& \ \ \
		+ \sum_{l=1}^{\frac{k}{2}-1}{(-1)}^l \Bigg[\sum_{d=0}^{\frac{k}{2}-l-1}\biggl\{2{(-1)}^d\binom{k-2l}{d}\biggr\} +{(-1)}^{\frac{k}{2}-l}\binom{k-2l}{\frac{k}{2}-l}\Bigg] \\
		& \ \ \ \times \sum_{\substack{l_1+l_2=l;\\ l_1,l_2 \in \mathbb{Z}_{\geq 0}} } {(t^2-t)}^{l_1}{(1-2t)}^{l_2} ~\mathbb{F}_{k-l-l_1}^k + {(-1)}^\frac{k}{2}\sum_{\substack{l_1+l_2=\frac{k}{2};\\ l_1,l_2 \in \mathbb{Z}_{\geq 0}}}  {(t^2-t)}^{l_1}{(1-2t)}^{l_2} ~\mathbb{F}_{l_2}^k \\
		& = \biggl\{2{(-1)}^{\frac{k}{2}-1}\binom{k-1}{\frac{k}{2}-1}+ {(-1)}^{\frac{k}{2}}\binom{k}{\frac{k}{2}}\biggr\} {z_p}^k\\
	\end{align*}
\begin{align*}
		& \ \ + \sum_{l=1}^{\frac{k}{2}-1}{(-1)}^l \Bigg[ 2{(-1)}^{\frac{k}{2}-l-1}\binom{k-2l-1}{\frac{k}{2}-l-1}+ {(-1)}^{\frac{k}{2}-l}\binom{k-2l}{\frac{k}{2}-l}\Bigg] \\
		& \ \ \ \times \sum_{\substack{l_1+l_2=l;\\ l_1,l_2 \in \mathbb{Z}_{\geq 0}} } {(t^2-t)}^{l_1}{(1-2t)}^{l_2} ~\mathbb{F}_{k-l-l_1}^k  + {(-1)}^\frac{k}{2} \sum_{\substack{l_1+l_2=\frac{k}{2};\\ l_1,l_2 \in \mathbb{Z}_{\geq 0}}}  {(t^2-t)}^{l_1}{(1-2t)}^{l_2} ~\mathbb{F}_{l_2}^k.
	\end{align*}

Using the relation $\binom{k-1}{\frac{k}{2}-1}=2\binom{k}{\frac{k}{2}}$ and $\binom{k-2l-1}{\frac{k}{2}-l-1}=2\binom{k-2l}{\frac{k}{2}-l}$ in the above equation, we get the desired result. 

	
\end{proof}
\begin{remark}
	 For $t=0$, \eqref{27} gives the following result on multiple zeta values, which is obtained in $[6]$,
	\begin{align*}
	\sum_{\substack{m+n=k,\\m,n \geq 0}}{(-1)}^m z_p^m \sh z_p^n  =
	\begin{cases}
		0& \text{if $k$ is odd}\\
		{(-1)}^{\frac{k}{2}}{z_{2p}}^\frac{k}{2} & \text{if $k$ is even}.
	\end{cases}
\end{align*}
\end{remark} 

Applying the $\mathbb{Q}[t]$-linear map $Z^t$ on both sides of \eqref{27}, we get

\begin{corollary}
	For positive integers $p \geq 2, k \geq 1$, we have
	\begin{align}
		\label{28}
		&\sum_{\substack{m+n=k,~ k:\text{ even}\\m,n \geq 0}}{(-1)}^m \z^t(\{p\}^m) \z^t(\{p\}^n) \nonumber \\
		&={(-1)}^{\frac{k}{2}}\sum_{\substack{ i+j=\frac{k}{2}; i,j \in \mathbb{Z}_{\geq 0}\\a_1 + \dots +a_{{j}}=kp;\\a_l=2rp;r \in \mathbb{N}}} {(t^2-t)}^i{(1-2t)}^j \z^t(a_1, \dots, a_{j}).
	\end{align}
\end{corollary}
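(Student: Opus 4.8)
The plan is to obtain (\ref{28}) from (\ref{27}) in a single step, by applying the $\mathbb{Q}[t]$-linear map $Z^t$ to both sides, exactly as (\ref{equn12}) was deduced from (\ref{equn11}) in passing from Theorem~\ref{thm2} to Theorem~\ref{thmp4}. Since $k$ is taken even, only the second branch of (\ref{27}) is relevant; for odd $k$ the left-hand side of (\ref{27}) is already zero in $\mathfrak{H}_t^1$ and there is nothing to prove.

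For the left-hand side I would use that $Z^t$ turns the $t$-stuffle product into ordinary multiplication, i.e.\ $Z^t(w_1 \s w_2) = Z^t(w_1)\,Z^t(w_2)$ for $w_1,w_2 \in \mathfrak{H}_t^0$ (the property of $\s$ already invoked in deriving Theorem~\ref{thmp4}, cf.~\cite{6}), so it suffices to identify $Z^t(z_p^m)$. For $m \ge 1$ the hypothesis $p \ge 2$ gives $z_p^m \in \mathfrak{H}_t^0$, hence $Z^t(z_p^m) = \zeta^t(\{p\}^m)$, while for $m = 0$ one has $z_p^0 = 1$ and $Z^t(1) = 1 = \zeta^t(\emptyset)$. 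Therefore $Z^t$ sends the left-hand side of (\ref{27}) to $\sum_{m+n=k,\ m,n\ge 0}(-1)^m\zeta^t(\{p\}^m)\zeta^t(\{p\}^n)$, the left-hand side of (\ref{28}).

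For the right-hand side I would unwind $\mathbb{F}_{l_2}^{k}$ from its definition: it is the sum of the words $z_{a_1}\cdots z_{a_{l_2}}$ over compositions $a_1 + \cdots + a_{l_2} = kp$ into positive multiples of $p$ for which the number of parts that are even multiples of $p$ equals $l_2$ (the value of the parameter $j$ in $\mathbb{F}_d^e$ being the exponent $l_2$ of $(1-2t)$), i.e.\ over compositions in which every $a_l$ is an even multiple of $p$. Each such word begins with $a_1 \ge 2p \ge 2$, so it lies in $\mathfrak{H}_t^0$ and $Z^t$ sends it to $\zeta^t(a_1,\dots,a_{l_2})$; by $\mathbb{Q}[t]$-linearity, $Z^t(\mathbb{F}_{l_2}^{k}) = \sum_{a_1+\cdots+a_{l_2}=kp,\ a_l = 2rp}\zeta^t(a_1,\dots,a_{l_2})$, with the $l_2 = 0$ term the empty sum on both sides. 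Substituting into (\ref{27}) and relabelling $(l_1,l_2)\mapsto(i,j)$ produces precisely the right-hand side of (\ref{28}).

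I do not expect a genuine obstacle: all the combinatorial content sits in Proposition~\ref{27} (via Lemma~\ref{lem2}), and what remains is the bookkeeping of $Z^t$. The only points that must be handled with care are that $p \ge 2$ is exactly what guarantees admissibility of every word to which $Z^t$ is applied (the $z_p^m$ on the left and the $z_{a_1}\cdots z_{a_{l_2}}$ on the right), that the degenerate terms $m = 0$ and $n = 0$ are read as $\zeta^t$ of the empty index, and that the side condition $a_l = 2rp$ in (\ref{28}) is the faithful translation of the constraint ``$|A|$ equals the number of parts'' forced by $j = l_2$ inside $\mathbb{F}_{l_2}^{k}$.
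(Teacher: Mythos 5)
Your proposal is correct and is exactly the paper's argument: the authors prove the corollary simply by applying the $\mathbb{Q}[t]$-linear map $Z^t$ to both sides of \eqref{27}, using that $Z^t$ is multiplicative with respect to $\s$ and that $Z^t(\mathbb{F}_{l_2}^{k})$ unwinds to the sum of $\z^t(a_1,\dots,a_{l_2})$ over compositions of $kp$ into even multiples of $p$. Your write-up merely makes explicit the admissibility and relabelling bookkeeping that the paper leaves implicit.
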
 
For $t=0$, \eqref{28} gives the following relation of multiple zeta values, which is  obtained in \cite{6},
\begin{align}
	\label{29}
	&\sum_{\substack{m+n=k,~ k:\text{ even}\\m,n \geq 0}}{(-1)}^m \z(\{p\}^m) \z(\{p\}^n) ={(-1)}^{\frac{k}{2}}\z(\{2p\}^\frac{k}{2}),
	\end{align}
here taking $\frac{k}{2}=l$, we get
 \begin{align}
 	\label{291}
 	&\z(\{2p\}^l) = \sum_{m=0}^{2l}{(-1)}^{m+l} \z(\{p\}^m) \z(\{p\}^{2l-m}).
 	 \end{align}
Also, for $t=1$, \eqref{28} gives
\begin{align}
	\label{30}
	&\sum_{\substack{m+n=k,~ k:\text{ even}\\m,n \geq 0}}{(-1)}^m \z^\star(\{p\}^m) \z^\star(\{p\}^n) =\z^\star(\{2p\}^\frac{k}{2}).
\end{align}
If we take $p=2$ in \eqref{29}, and then using the formulas $$\z(\{2\}^k)=\dfrac{\pi^{2k}}{(2k+1)!};~ \z(\{4\}^k)=\dfrac{2^{2k+1}\pi^{4k}}{(4k+2)!},$$ 
we get the following relation of binomial coefficients
\begin{align}
	\label{31}
	\sum_{\substack{m+n=k,~ k:\text{ even}\\m,n \geq 0}}{(-1)}^m \dfrac{1}{(2m+1)!(2n+1)!}={(-1)}^{\frac{k}{2}}\dfrac{2^{k+1}}{(2k+2)!}.
\end{align}
Putting $p=4$ in \eqref{291}, for any positive integer $l$, we get
\begin{align*}
	\z(\{8\}^l)&= \sum_{m=0}^{2l}{(-1)}^{m+l} \z(\{4\}^m) \z(\{4\}^{2l-m})\\
	&=\sum_{m=0}^{2l}{(-1)}^{m+l}\dfrac{2^{4l+2}\pi^{8l}}{(4m+2)!(8l-4m+2)!}.
\end{align*} 
From \cite[ Eq.(3.26)]{4}, we have,
\begin{align*}
	\z(\{8\}^l)={(-1)}^l\sum_{\substack{ n_0+n_1+n_2+n_3=4l\\ n_0, \dots, n_3 \geq 0 }}\dfrac{{(\sqrt{-1})}^{n_1+2n_2+3n_3}}{(2n_0+1)! \dots (2n_3+1)!} \pi^{8l}.
\end{align*}
Comparing the above two formulas for $	\z(\{8\}^l)$, we get
\begin{align*}
	&\sum_{\substack{ n_0+n_1+n_2+n_3=4l\\ n_0, \dots, n_3 \geq 0 }}\dfrac{{(\sqrt{-1})}^{n_1+2n_2+3n_3}}{(2n_0+1)! \dots (2n_3+1)!}=\sum_{m=0}^{2l}{(-1)}^{m}\dfrac{2^{4l+2}}{(4m+2)!(8l-4m+2)!}.
\end{align*}	\\

$\mathbf{Acknowledgment}$: The research of first author is supported by the University Grants Commission (UGC), India through NET-JRF (Ref. No. 191620198830).\\



	{}

\end{document}